\documentclass[11pt]{article}
\usepackage[a4paper,margin=26mm]{geometry}
\usepackage[T1]{fontenc}
\usepackage{lmodern,microtype,amsmath,amssymb,amsthm,mathtools}
\usepackage{booktabs,array,enumitem,algorithm,algpseudocode,needspace}
\usepackage[round,authoryear]{natbib}
\usepackage[dvipsnames]{xcolor}
\usepackage[colorlinks=true,linkcolor=MidnightBlue,citecolor=MidnightBlue,urlcolor=MidnightBlue]{hyperref}
\usepackage{bookmark}
\newcommand{\E}{\mathbb E}
\newcommand{\R}{\mathbb R}
\newcommand{\norm}[1]{\left\|#1\right\|}
\newcommand{\inner}[2]{\left\langle#1,#2\right\rangle}
\newcommand{\N}{\mathcal N}
\newtheorem{theorem}{Theorem}
\newtheorem{lemma}{Lemma}

\newtheorem{assumption}{Assumption}
\theoremstyle{remark}

\setlist{itemsep=3pt,topsep=5pt}
\allowdisplaybreaks[1]
\title{Adaptive Variance Reduction\\without Knowing Oracle Smoothness}
\author{
{Wei Jiang}$^{1,2}$ \quad
{Yibo Wang}$^{2}$ \quad
{Zechao Li}$^{1}$ \\[0.5em]
\small$^1$School of Computer Science and Engineering, 
Nanjing University of Science and Technology\\
\small$^2$National Key Laboratory for Novel Software Technology, Nanjing University\\
\small \{jiangw, wangyb\}@lamda.nju.edu.cn, \quad zechao.li@njust.edu.cn
}
\date{}
\begin{document}
\maketitle

\begin{abstract}
Variance reduction improves the convergence rate of stochastic non-convex optimization under the average smoothness condition. Existing adaptive variance reduction methods are especially designed for average smoothness and do not adapt to standard smoothness. In this paper, we study how one algorithm can adapt to these two settings and the noiseless setting without knowing the smoothness constants or the variance bound. Given the gradient budget $T$, we develop a two-phase method. The first phase compares two independent gradients and switches to normalized momentum when their difference exceeds a threshold. This phase provides both a candidate with standard smooth and noiseless guarantees and also ensures a bounded expected function gap for the second variance reduction phase. The resulting rates are $O(T^{-1/3})$ under average smoothness, $O(T^{-1/4})$ under standard smoothness, and $O(T^{-1/2})$ without noise. When $T$ is unknown, we propose a method with two paths that attain the corresponding rates with logarithmic factors in the stochastic settings. Here, we use a variance reduction path to deal with average smoothness and employ the AdaGrad-Norm for standard smoothness and noiseless setting. Neither method restarts from the initial point or requires a growing number of oracle calls in each round.
\end{abstract}

\section{Introduction}
In this paper, we investigate the stochastic optimization problem in the form
\begin{equation}\label{eq:problem}
 \min_{x\in\R^d} f(x),
\end{equation}
where the objective $f$ is a smooth non-convex function and only stochastic gradients are available. Existing variance reduction techniques, such as STORM \citep{storm2019}, use two evaluations with the same sample to develop the gradient estimator, achieving the $O(T^{-1/3})$ convergence rate under average smoothness. 

To avoid requiring the problem constants for setting hyperparameters, adaptive variance reduction methods \citep{jiang2024adaptive} are proposed. These methods ensure the same $O(T^{-1/3})$ rate with average smoothness, without knowing problem parameters such as the smoothness constant $L$ and the variance upper bound $\sigma$. However, an interesting question remains: how can an algorithm retain the faster guarantee when average smoothness is available, while remaining reliable when the objective is only generally smooth? We also hope the same algorithm can accelerate in the noiseless setting. 

In this paper, we propose novel algorithms for solving this problem. For a known budget, our algorithm has two optimization phases. The first phase starts with normalized stochastic gradient steps. At each point, it compares two independent gradients and continues these steps when their difference is below a threshold. After the first failed comparison, it switches to normalized momentum for the rest of the phase. The second phase applies normalized variance reduction from the current point. Each phase stores one candidate, and stochastic gradients are used to select the output in the final steps. For the unknown budget, we maintain a normalized variance reduction path and an AdaGrad-Norm path. The latter provides both the standard smooth and noiseless guarantees. Table~\ref{tab:rates} summarizes the results.

\begin{table}[t]
\centering
\caption{Bounds on $\E\norm{\nabla f(\widehat x_T)}$, where $T$ is the total gradient evaluation budget.}
\label{tab:rates}
\small
\begin{tabular}{lcc}
\toprule
Oracle setting & Known $T$ & Unknown $T$\\
\midrule
Average smoothness & $O(T^{-1/3})$ & $O(\log(eT)T^{-1/3})$\\
Standard smoothness only & $O(T^{-1/4})$ & $O(\sqrt{\log(eT)}T^{-1/4})$\\
Noiseless & $O(T^{-1/2})$ & $O(T^{-1/2})$\\
\bottomrule
\end{tabular}
\end{table}

\section{Assumptions and notation}\label{sec:assumptions}
The stochastic gradient at $x$ is denoted by $\nabla f(x;\xi)$, and we also write $g(x;\xi)=\nabla f(x;\xi)$. The following assumptions are imposed throughout the paper.

\begin{assumption}[Standard smoothness]\label{ass:smooth}
There exists $L\ge0$ such that, for every $x,y\in\R^d$,
\[
 \norm{\nabla f(x)-\nabla f(y)}\le L\norm{x-y}.
\]
\end{assumption}

\begin{assumption}[Unbiased gradients and bounded variance]\label{ass:variance}
For every $x\in\R^d$,
\[
 \E_\xi[\nabla f(x;\xi)]=\nabla f(x),\qquad
 \E_\xi\norm{\nabla f(x;\xi)-\nabla f(x)}^2\le\sigma^2.
\]
\end{assumption}

\begin{assumption}[Lower bounded objective]\label{ass:lower}
The value $f_*:=\inf_{x\in\R^d}f(x)$ is finite. For the initial point $x_1$, let $\Delta_f=f(x_1)-f_*$.
\end{assumption}

The stronger rate uses the following additional property. However, our algorithms do not need to know whether it holds.
\begin{assumption}[Average smoothness]\label{ass:average}
There exists $\bar L\ge0$ such that, for every $x,y\in\R^d$,
\begin{equation}\label{eq:average-smoothness}
 \E_\xi\norm{\nabla f(x;\xi)-\nabla f(y;\xi)}^2
 \le \bar L^2\norm{x-y}^2.
\end{equation}
\end{assumption}
Average smoothness is also called mean-squared smoothness. The noiseless setting corresponds to $\sigma=0$ in Assumption~\ref{ass:variance}.
Neither algorithm uses $L$, $\bar L$, $\sigma$, $\Delta_f$, or function values. We do not assume bounded gradients, a bounded domain, a bounded range of objective values, or higher noise moments. 
We define the notation
\[
 \N(v)=\begin{cases}v/\norm v,&v\ne0,\\0,&v=0.\end{cases}
\]
For points with positive weights $w_t$, a weighted reservoir stores one point by replacing its current value with the $ t$-th point with probability $w_t/\sum_{s=1}^t w_s$. For any sequence of points and weights, the probability of retaining the $ t$-th point after $n$ steps is therefore $w_t/\sum_{s=1}^n w_s$. This implementation stores a single point instead of the entire path.

\section{The proposed method for a known budget}
\label{sec:finite-horizon}
Our first algorithm uses the budget $T$ to choose the length of two optimization phases. Its first phase combines normalized stochastic gradient steps with normalized momentum. It provides the standard smooth guarantee and accelerates automatically when the oracle is noiseless. The second phase applies normalized variance reduction from the current point.
For $T\ge6$, set
\begin{equation}
 n=\max\{k\in\mathbb N:4k+2\lceil k^{2/3}\rceil\le T\},
 \qquad b=\lceil n^{2/3}\rceil.
 \label{eq:serial-budget}
\end{equation}
Here $n$ is the length of each optimization phase, and $b$ is the number
of validation samples used at each candidate. The parameters are
\begin{equation}
 \eta_D=n^{-1/2},\qquad \eta_S=n^{-3/4},\qquad
 \beta_S=n^{-1/2},\qquad \eta_V=\beta_V=n^{-2/3}.
 \label{eq:serial-parameters}
\end{equation}

\subsection{Updates and outputs}

\emph{The first phase} starts in the direct-gradient mode. At its current point,
draw two independent stochastic gradients $G_t, G'_t$. As long as
\begin{equation}
 \norm{G_t-G'_t}\le n^{-1/2},
 \label{eq:serial-test}
\end{equation}
take a normalized step using $G_t$ and step size $\eta_D$. At the first
failure of \eqref{eq:serial-test}, switch permanently to normalized
momentum with parameters $\eta_S,\beta_S$. On that round, draw one independent gradient to initialize the momentum estimator. On all later rounds of the first phase, use one fresh gradient in the momentum update.
The above test does not determine whether the oracle is noiseless. A noisy
oracle may also pass it. The analysis accounts for these accepted noisy steps
directly. In the noiseless setting, the test always passes, so
the entire phase becomes normalized gradient descent.

Let $\eta_t$ be the step size used at round $t\le n$, and write
$W_t=\sum_{s=1}^t\eta_s$. The first phase maintains a weighted reservoir:
after deciding the current mode and step size, replace its stored point
by $x_t$ with probability $\eta_t/W_t$. Freeze the resulting candidate
$X_G$ at the end of the phase. For any  optimization path, the
reservoir selects index $t$ with probability
$\eta_t/(\sum_{s=1}^t\eta_s)$.

\emph{The second phase} starts at $x_{n+1}$ and uses the normalized STORM update.
It maintains a uniform reservoir over its $n$ points,
giving a candidate $X_V$. During the first $b$ rounds of this phase,
accumulate one independent validation gradient at $X_G$ per round.
After freezing $X_V$, take one validation gradient at $X_V$ in each of
$b$ additional rounds. No optimization updates are needed in the final
validation rounds.

\begin{algorithm}[htb]
\caption{The proposed method}
\label{alg:serial}
\small
\begin{algorithmic}[1]
\Require Initial point $x_1$, budget $T\ge6$.
\State Set $n,b$ and the parameters by \eqref{eq:serial-budget}--\eqref{eq:serial-parameters}.
\State Set the mode to direct-gradient and $W_0=0$.
\For{$t=1,\ldots,n$}
  \If{the mode is direct-gradient}
    \State Draw independent $G_t,G'_t$ at $x_t$.
    \If{$\norm{G_t-G'_t}\le n^{-1/2}$}
      \State Set $v_t=G_t$ and $\eta_t=\eta_D$.
    \Else
      \State Switch to momentum, draw a fresh gradient $v_t$ at $x_t$, and set $\eta_t=\eta_S$.
    \EndIf
  \Else
    \State Draw a stochastic gradient $g_t$ at $x_t$.
    \State Set $v_t=(1-\beta_S)v_{t-1}+\beta_Sg_t$ and $\eta_t=\eta_S$.
  \EndIf
  \State Set $W_t=W_{t-1}+\eta_t$ and replace the first reservoir by $x_t$ with probability $\eta_t/W_t$.
  \State $x_{t+1}=x_t-\eta_t\N(v_t)$.
\EndFor
\State Freeze the first reservoir as $X_G$ and set its validation sum to zero.
\For{$t=n+1,\ldots,2n$}
  \State If $t=n+1$, initialize $v_t=g(x_t;\xi_t)$ with one fresh query.
  \State Otherwise, use a sample to set
  $v_t=g(x_t;\xi_t)+(1-\beta_V)[v_{t-1}-g(x_{t-1};\xi_t)]$.
  \State If $t\le n+b$, add one independent gradient at $X_G$ to its validation sum.
  \State Replace the second reservoir by $x_t$ with probability $1/(t-n)$.
  \State $x_{t+1}=x_t-\eta_V\N(v_t)$.
\EndFor
\State Freeze the second reservoir as $X_V$ and average $b$ fresh gradients to obtain $\widehat g_V$.
\State Divide the validation sum at $X_G$ by $b$ to obtain $\widehat g_G$.
\State \Return $\widehat x_T$, which is the candidate in $\{X_G,X_V\}$ with smaller estimated gradient norm.
\end{algorithmic}
\end{algorithm}

\subsection{Guarantees}
Since $\lceil k^{2/3}\rceil\le k$, the
choice in \eqref{eq:serial-budget} gives
$n\ge\lfloor T/6\rfloor\ge T/12$ for $T\ge6$. Also,
$b\ge n^{2/3}$, so the validation error is at most $2\sigma n^{-1/3}$. For the statement below, define the analysis constant
\[
 C_G=\Delta_f+2+\frac52L+(4+2\sqrt2)\sigma.
\]
\begin{theorem}
\label{thm:serial}
Under Assumptions~\ref{ass:smooth}--\ref{ass:lower}, the output of
Algorithm~\ref{alg:serial} satisfies
\begin{align}\label{eq:serial-standard}
\begin{split}
     \E\norm{\nabla f(\widehat x_T)}
 &\le C_G n^{-1/4}+\frac{2\sigma}{\sqrt b}\\
 &= O\left(\frac{1+L+\sigma+\Delta_f}{T^{1/4}}\right).
 \end{split}
\end{align}
If Assumption~\ref{ass:average} also holds, the same output satisfies
\begin{align}
\begin{split}
    \E\norm{\nabla f(\widehat x_T)}
 &\le
 \left[C_G+\frac L2+2\sigma+
       2\sqrt{2(\sigma^2+\bar L^2)}\right]n^{-1/3}
 +\frac{2\sigma}{\sqrt b} \\
 &= O\left(\frac{1+L+\bar{L}+\sigma+\Delta_f}{T^{1/3}}\right).
\end{split}
\end{align} \label{eq:serial-mss-rate}
If the oracle is noiseless, then
\begin{align}
\begin{split}
 \E\norm{\nabla f(\widehat x_T)}
 &\le\left(\Delta_f+\frac L2\right)n^{-1/2} \\
 &= O\left(\frac{\Delta_f+L}{T^{1/2}}\right).
\end{split}
\end{align} \label{eq:serial-deterministic}
\end{theorem}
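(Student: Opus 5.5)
The proof splits into three parts: a selection lemma, an analysis of the first phase, and an analysis of the second phase.

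\emph{Selection lemma.} The output selection costs at most $2\sigma/\sqrt b$. Conditionally on the frozen candidates, $\widehat g_G$ and $\widehat g_V$ are averages of $b$ fresh unbiased gradients, so $\E\norm{\widehat g_X-\nabla f(X)}\le\sigma/\sqrt b$ by Jensen. Since the output minimizes the estimated norm,
\[
\norm{\nabla f(\widehat x_T)}\le\min\{\norm{\nabla f(X_G)},\norm{\nabla f(X_V)}\}+\norm{\widehat g_G-\nabla f(X_G)}+\norm{\widehat g_V-\nabla f(X_V)}.
\]
Taking expectations shows that it suffices to bound either $\E\norm{\nabla f(X_G)}$ or $\E\norm{\nabla f(X_V)}$.

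\emph{First phase.} Here I use the descent inequality for normalized steps:
\[
f(x_{t+1})\le f(x_t)-\eta_t\norm{\nabla f(x_t)}+2\eta_t\norm{v_t-\nabla f(x_t)}+\tfrac L2\eta_t^2.
\]
Summing and using the reservoir identity, $\E\norm{\nabla f(X_G)}=\E[\sum_t\eta_t\norm{\nabla f(x_t)}/W_n]$. Because $\eta_D\ge\eta_S$, we have $W_n\ge n\eta_S=n^{1/4}$. The potentially troublesome $W_n$ in the denominator is therefore lower bounded deterministically, and
\[
\E\norm{\nabla f(X_G)}\le n^{-1/4}\,\E\Bigl[\Delta_f+2\sum_t\eta_t\norm{v_t-\nabla f(x_t)}+\tfrac L2\sum_t\eta_t^2\Bigr].
\]
The error terms split as follows.
\begin{itemize}
\item \emph{Accepted direct steps.} The conditional expectation of $\norm{G_t-\nabla f}\mathbf 1\{\text{pass}\}$ must be controlled. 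Since $G'_t$ is an independent copy, $\E\norm{G_t-\nabla f(x_t)}\le\E\norm{G_t-G'_t}$ by Jensen. On the pass event this is at most $n^{-1/2}$, so the error over $n$ steps of size $n^{-1/2}$ contributes $O(1)$. The bound needs care, since conditioning on the pass event correlates $G_t$ with $G'_t$. I would instead bound $\E[\norm{G_t-\nabla f}\mathbf1\{\text{pass}\}]\le n^{-1/2}+\E\norm{G'_t-\nabla f}\mathbf 1\{\text{pass}\}$, and sum the $\sigma$ terms weighted by $\eta_D$. This sum is the crux, because $n\eta_D\sigma=\sqrt n\sigma$ is too large.
\item \emph{Resolving the crux.} A passed noisy step means the noise is small in realization. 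I would therefore use $\norm{G_t-\nabla f}\le\norm{G_t-G'_t}/2+\norm{(G_t+G'_t)/2-\nabla f}$. This still leaves $\sigma$ terms, so my fallback is to accept a phase-dependent constant: expected quantities are compared via the $n^{-1/4}$ normalization, and the $\sigma$ terms appear as $(4+2\sqrt2)\sigma$ in $C_G$.
\item \emph{Momentum steps.} I would use the standard normalized-momentum recursion. The error $\E\norm{v_t-\nabla f(x_t)}$ is at most $(1-\beta_S)^{t-t_0}\sigma+\sqrt{\beta_S}\sigma+L\eta_S/\beta_S$, which gives total $\eta_S\cdot n(\sqrt{\beta_S}\sigma+L\eta_S/\beta_S)+\eta_S\sigma/\beta_S=O((\sigma+L)n^{0})$.
\item \emph{Step-size term.} $\frac L2\sum\eta_t^2\le\frac L2$.
\end{itemize}
Together these give $C_Gn^{-1/4}$ and hence \eqref{eq:serial-standard}. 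In the noiseless case every test passes and $v_t=\nabla f(x_t)$, so $W_n=\sqrt n$ and the bound is $(\Delta_f+\frac L2)n^{-1/2}$. Validation is then exact, which gives the third bound.

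\emph{Second phase.} For the average-smoothness rate, the same summation shows that the first phase yields $\E f(x_{n+1})-f_*\le C_G+\text{const}$. The constant comes from $\E[\sum\eta_t\cdot\text{err}]$ without the $W_n$ division, which is exactly the bracket bounded above. Normalized STORM from $x_{n+1}$, with $\eta_V=\beta_V=n^{-2/3}$, then gives
\[
\E\norm{\nabla f(X_V)}\le \frac{\E[f(x_{n+1})-f_*]}{n\eta_V}+\frac{L\eta_V}{2}+2\,\overline{\E\norm{v_t-\nabla f}}.
\]
The STORM error recursion $\E\norm{\epsilon_t}^2\le(1-\beta)^2\E\norm{\epsilon_{t-1}}^2+2\beta^2\sigma^2+2\bar L^2\eta^2$ keeps the averaged error at most $\sigma/(n\beta_V)+\sqrt{2(\sigma^2+\bar L^2)}\sqrt{\beta_V}$, which produces the stated $n^{-1/3}$ constant.

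The main obstacle is the accepted noisy direct steps in the first phase. I need to show that their cumulative bias times $\eta_D$ stays $O(1+\sigma)$, and not $O(\sigma\sqrt n)$. To exploit the threshold, I would first try a symmetrization argument: condition on the unordered pair $\{G_t,G'_t\}$ and use that $\nabla f$ is the mean of each.
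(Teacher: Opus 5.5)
\textbf{Genuine gap: the error of accepted noisy direct steps is never bounded.} This is the step you flag as the crux, and your fallback simply asserts the constant $(4+2\sqrt2)\sigma$. It matters beyond the first candidate, because the average-smoothness rate also needs $\E[f(x_{n+1})-f_*]\le C_G$ at the start of the STORM phase.

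Your intermediate bound $\E[\norm{G_t-\nabla f(x_t)}\mathbf 1\{\text{pass}\}]\le n^{-1/2}+\E[\norm{G'_t-\nabla f(x_t)}\mathbf 1\{\text{pass}\}]$ is circular, since by symmetry the last term equals the original quantity. The symmetrization idea also fails: given the unordered pair, $G_t$ has conditional mean $(G_t+G'_t)/2$, not $\nabla f(x_t)$.

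More fundamentally, no per-step argument can work, and the intuition that a passed step has small realized noise is false. Take one-dimensional noise equal to $c$ with probability $1-q$ and $-c(1-q)/q$ with probability $q$. Two equal draws always pass, yet the error on that event is $c$, which is of order $\sigma$ when $q$ is a constant. The true per-step bound is of order $n^{-1/2}+\sigma\sqrt{p_t}$, where $p_t$ is the failure probability at $x_t$. Summing $\eta_D\sigma\sqrt{p_t}$ naively over $n$ rounds still gives $\sigma\sqrt n$.

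\textbf{Missing idea: amortize over the first-failure time, using that the switch to momentum is permanent.} First, condition on $x_t$. Apply Jensen with the independent copy, then split on pass/fail and use Cauchy--Schwarz on the failure part:
\[
 \E\bigl[\norm{G_t-\nabla f(x_t)}\mathbf 1\{\text{pass}\}\bigr]\le\E\norm{G_t-G'_t}\le n^{-1/2}+\sqrt2\,\sigma\sqrt{p_t}.
\]
Second, let $S_t$ indicate that all tests before round $t$ passed. Then $\E[S_tp_t]=\Pr\{\text{first failure at }t\}$. These events are disjoint, so $\sum_t\E[S_tp_t]\le1$, and Cauchy--Schwarz gives $\sum_t\E[S_t\sqrt{p_t}]\le\sqrt n$. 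Hence the accepted-step error is at most $\eta_D(n^{-1/2}\cdot n+\sqrt2\sigma\sqrt n)=1+\sqrt2\sigma$. Oracles with a large failure probability incur a large per-step error but leave the direct mode quickly, while oracles that survive long must have small $p_t$.

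With this lemma in place, the rest of your outline coincides with the paper's proof. That includes the momentum tracking bound (initialized with a fresh sample at the failure round), $W_n\ge n^{1/4}$, the step-size term, the STORM recursion, the selection argument, and the noiseless case.
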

Equivalently, complexities for
$\E\norm{\nabla f(\widehat x_T)}\le\epsilon$ are
$O(\epsilon^{-4})$, $O(\epsilon^{-3})$ and $O(\epsilon^{-2})$, respectively.

\section{The anytime algorithm}
\label{sec:anytime}
In this section, we assume the budget $T$ is unknown in advance. 

\subsection{The proposed schedule}
When the budget $T$ is unknown, we use two optimization paths: the variance reduction path and the AdaGrad-Norm path. Both paths start at $x_1$.
For the variance reduction path, set
\begin{equation}
\label{eq:anytime-vr-parameters}
 \beta_t=\eta_t=t^{-2/3},\qquad
 v_1=g(x^V_1;\xi_1),\qquad x^V_1=x_1.
\end{equation}
At round $t\geq 2$, draw a fresh sample $\xi_t$ and form
\begin{equation}
\label{eq:anytime-vr-update}
 v_t=g(x^V_t;\xi_t)
       +(1-\beta_t)\bigl(v_{t-1}-g(x^V_{t-1};\xi_t)\bigr).
\end{equation}
At every round, update
\begin{equation}
\label{eq:anytime-vr-point}
 x^V_{t+1}=x^V_t-\eta_t\N(v_t).
\end{equation}

The AdaGrad-Norm path uses one sample per round:
\begin{equation}
\label{eq:anytime-adagrad-update}
 x^A_1=x_1,\qquad b_0=1,\qquad
 g_t=g(x^A_t;\zeta_t),\qquad
 b_t=\sqrt{b_{t-1}^2+\norm{g_t}^2},\qquad
 x^A_{t+1}=x^A_t-\frac{g_t}{b_t}.
\end{equation}
The samples used by the two paths and by all validations are independent.

The algorithm maintains one reservoir candidate for each path.  Define
$W_t=\sum_{s=1}^t\eta_s$.  At round $t$, replace the variance reduction
reservoir by $x^V_t$ with probability $\eta_t/W_t$, and replace the AdaGrad
reservoir by $x^A_t$ with probability $1/t$.    At a checkpoint $n$, the two reservoirs have
the distributions
\begin{equation}
\label{eq:anytime-reservoir-laws}
 \Pr\{V_n=x^V_t\}=\frac{\eta_t}{W_n},\qquad
 \Pr\{A_n=x^A_t\}=\frac1n,\qquad 1\leq t\leq n.
\end{equation}

Checkpoints are $n=\{1,2,4,8,\ldots\}$.  At the end of round $n$, copy and freeze
$V_n$ and $A_n$.  During rounds $\{n+1,\ldots,2n\}$, take one fresh gradient at
each of these two frozen points in every round.  Their running averages are
\begin{equation}
\label{eq:anytime-validation}
 \widehat g^V_n=\frac1n\sum_{j=1}^n g(V_n;\xi^V_{n,j}),\qquad
 \widehat g^A_n=\frac1n\sum_{j=1}^n g(A_n;\xi^A_{n,j}).
\end{equation}
At the end of round $2n$, report the point with the smaller estimated
gradient norm:
\begin{equation}
\label{eq:anytime-checkpoint-output}
 Z_n=
 \begin{cases}
 V_n,&\norm{\widehat g^V_n}\leq\norm{\widehat g^A_n},\\ \\
 A_n,&\norm{\widehat g^V_n}>\norm{\widehat g^A_n}.
 \end{cases}
\end{equation}
The optimization updates and both reservoirs continue throughout these
validation rounds.  At round $2n$, the reservoirs for the longer prefix
$2n$ are copied for the next validation block and the previous report remains
available until the next report is completed.  Thus, at any deterministic
round $\tau\geq 2$, the reported point is $Z_n$, where
\begin{equation}
\label{eq:anytime-report-index}
 n=2^{\lfloor\log_2\tau\rfloor-1},\qquad
 2n\leq\tau<4n.
\end{equation}
\begin{algorithm}[htb]
\caption{The anytime algorithm}
\label{alg:anytime}
\small
\begin{algorithmic}[1]
\Require Initial point $x_1$.
\State Initialize $x_1^V=x_1^A=x_1$, $b_0=1$, and $W_0=0$.
\State Initialize the reservoirs $R^V=R^A=x_1$, the report $Z=x_1$,
and the checkpoint $n=1$.
\For{$t=1,2,\ldots$}
    \State Set $\eta_t=\beta_t=t^{-2/3}$ and
    $W_t=W_{t-1}+\eta_t$.
    \State Draw a fresh sample $\xi_t$.
    \If{$t=1$}
        \State Set $v_1=g(x_1^V;\xi_1)$.
    \Else
        \State Compute $v_t=g(x_t^V;\xi_t)
        +(1-\beta_t)
        \bigl[v_{t-1}-g(x_{t-1}^V;\xi_t)\bigr]$.
    \EndIf
    \State Draw a fresh independent gradient
    $g_t=g(x_t^A;\zeta_t)$ and set
    $b_t=\sqrt{b_{t-1}^2+\norm{g_t}^2}$.
    \State Replace $R^V$ by $x_t^V$
    with probability $\eta_t/W_t$, and $R^A$ by $x_t^A$
    with probability $1/t$.
    \State Update
    $x_{t+1}^V=x_t^V-\eta_t\N(v_t)$ and
    $x_{t+1}^A=x_t^A-g_t/b_t$.
    \If{$t>n$}
        \State Draw independent validation samples
        $\xi_t^V,\xi_t^A$ and update
        \[
        \widehat g_n^V
        \gets \widehat g_n^V+\frac{g(V_n;\xi_t^V)}{n},
        \qquad
        \widehat g_n^A
        \gets \widehat g_n^A+\frac{g(A_n;\xi_t^A)}{n}.
        \]
    \EndIf
    \If{$t=2n$}
        \State Copy $V_n$ to $Z$ if
        $\norm{\widehat g_n^V}\le\norm{\widehat g_n^A}$;
        otherwise copy $A_n$ to $Z$.
        \State Set $n\gets 2n$.
    \EndIf
    \If{$t=n$}
        \State Copy and freeze $V_n\gets R^V$ and $A_n\gets R^A$,
        replacing the previous frozen pair.
        \State Initialize
        $\widehat g_n^V=\widehat g_n^A=0$.
    \EndIf
    \If{an external stopping request is received}
        \State \Return $Z$.
    \EndIf
\EndFor
\end{algorithmic}
\end{algorithm}
\subsection{Guarantees}
Define the following constants
\begin{align}
\label{eq:anytime-constants}
 C_A&=4\Delta_f+2\norm{\nabla f(x_1)}^2+8L^2+2\sigma^2,\nonumber\\
 M_n&=4+4\sigma\sqrt n+2C_A+16L^2,\nonumber\\
 K&=6\bar L^2+2\sigma^2.
\end{align}
The initial gradient in $C_A$ can be eliminated using $\norm{\nabla f(x_1)}^2\leq 2L\Delta_f$. For $L>0$, the descent inequality at $x_1-L^{-1}\nabla f(x_1)$ gives this bound. When $L=0$, a
lower-bounded function with constant gradient has zero gradient.
\begin{theorem}
\label{thm:anytime}
Suppose $f$ is lower bounded and $L$-smooth, and the stochastic gradient
oracle is unbiased with variance at most $\sigma^2$.  At every deterministic
round $\tau\geq 2$, let $n$ be defined by
\eqref{eq:anytime-report-index}.  The reported point of the Algorithm satisfies
\begin{equation}
\label{eq:anytime-general-guarantee}
 \E\norm{\nabla f(Z_n)}
 \leq
 \sqrt{\frac{M_n(C_A+4L\log M_n)}{n}}
 +\frac{2\sigma}{\sqrt n}.
\end{equation}
If the oracle additionally satisfies mean-square smoothness, then
\begin{equation}
\label{eq:anytime-mss-guarantee}
 \E\norm{\nabla f(Z_n)}
 \leq
 \min\left\{
 \frac{\Delta_f+2L+2\sqrt K(1+\log n)}{n^{1/3}},\,
 \sqrt{\frac{M_n(C_A+4L\log M_n)}{n}}
 \right\}
 +\frac{2\sigma}{\sqrt n}.
\end{equation}
In particular, for fixed problem parameters, the three rates are respectively
\begin{align}
\label{eq:anytime-three-rates}
 &O\left(\frac{1+\log\tau}{\tau^{1/3}}\right)
 &&\text{under mean-square smoothness},\nonumber\\
 &O\left(\frac{\sqrt{1+\log\tau}}{\tau^{1/4}}\right)
 &&\text{under standard smoothness},\\
 &O(\tau^{-1/2})&&\text{when } \sigma=0 \text{ with standard smoothness}.\nonumber
\end{align}
\end{theorem}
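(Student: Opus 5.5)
The proof splits into three parts: validation, the AdaGrad path, and the variance-reduction path.

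\textbf{Validation.} The reported point $Z_n$ is the selected one among $V_n$ and $A_n$. The validation samples are independent of both frozen candidates. Conditional on $(V_n,A_n)$, each $\widehat g^\cdot_n$ is an average of $n$ unbiased gradients, so
\[
\E\norm{\widehat g^\cdot_n-\nabla f(\cdot)}\le\sigma/\sqrt n .
\]
By the triangle inequality and the selection rule,
\[
\norm{\nabla f(Z_n)}\le\min\{\norm{\nabla f(V_n)},\norm{\nabla f(A_n)}\}+\norm{\widehat g^V_n-\nabla f(V_n)}+\norm{\widehat g^A_n-\nabla f(A_n)} .
\]
Taking expectations gives
\[
\E\norm{\nabla f(Z_n)}\le\min\{\E\norm{\nabla f(V_n)},\E\norm{\nabla f(A_n)}\}+2\sigma/\sqrt n .
\]
Since the min of expectations dominates the expectation of the min, we can bound each path separately. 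The reservoir laws \eqref{eq:anytime-reservoir-laws} give
\[
\E\norm{\nabla f(A_n)}=\frac1n\sum_{t\le n}\E\norm{\nabla f(x^A_t)}
\quad\text{and}\quad
\E\norm{\nabla f(V_n)}=\frac{1}{W_n}\sum_{t\le n}\eta_t\E\norm{\nabla f(x^V_t)} .
\]
This holds because the reservoir randomness is independent of the paths.

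\textbf{AdaGrad path.} I would follow the standard AdaGrad-Norm analysis in the style of Ward--Wu--Bottou and Défossez et al. Apply the descent lemma to $x^A_{t+1}=x^A_t-g_t/b_t$. Handle the correlation between $g_t$ and $b_t$ by replacing $b_t$ with the decorrelated proxy
\[
\tilde b_t=\sqrt{b_{t-1}^2+\norm{\nabla f(x^A_t)}^2+\sigma^2} .
\]
This yields a bound of the form
\[
\E\sum_{t\le n}\frac{\norm{\nabla f(x^A_t)}^2}{\tilde b_t}\le C_A+4L\,\E\log b_n ,
\]
where the $L\sum\norm{g_t}^2/b_t^2\le 2L\log b_n$ term produces the logarithm. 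Then use Cauchy--Schwarz:
\[
\Bigl(\sum\norm{\nabla f}\Bigr)^2\le\sum\frac{\norm{\nabla f}^2}{\tilde b_t}\cdot\sum\tilde b_t .
\]
Bound $\E\,\tilde b_n$ and $\E\,b_n$ by something of order $M_n$. The $4\sigma\sqrt n$ term in $M_n$ comes from $\E\sqrt{\sum\norm{g_t-\nabla f}^2}\le\sigma\sqrt n$. The $2C_A+16L^2$ terms come from a self-bounding argument: the sum of squared gradients is controlled by $b_n$ via the same descent inequality. Jensen's inequality on the concave $\log$ converts $\E\log b_n$ into $\log M_n$. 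This gives
\[
\frac1n\sum_t\E\norm{\nabla f(x^A_t)}\le\sqrt{M_n(C_A+4L\log M_n)/n},
\]
which covers \eqref{eq:anytime-general-guarantee} and also the second entry of the min in \eqref{eq:anytime-mss-guarantee}.

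I expect the self-bounding estimate of $\E b_n$ without bounded gradients to be the main obstacle. Tracking exact constants through the decorrelation step is the delicate part. I would first try to bound $b_n^2$ through the descent inequality plus $\norm{g_t}^2\le 2\norm{\nabla f}^2+2\norm{g_t-\nabla f}^2$, and then solve the resulting quadratic inequality in $b_n$.

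\textbf{Variance-reduction path.} Use the normalized-step lemma
\[
f(x_{t+1})\le f(x_t)-\eta_t\norm{\nabla f(x_t)}+2\eta_t\norm{v_t-\nabla f(x_t)}+\frac{L}{2}\eta_t^2 .
\]
Summing, $\sum\eta_t\E\norm{\nabla f}\le\Delta_f+2\sum\eta_t\E\norm{\epsilon_t}+\frac L2\sum\eta_t^2$, where $\epsilon_t=v_t-\nabla f(x^V_t)$. The STORM recursion together with mean-square smoothness gives
\[
\E\norm{\epsilon_t}^2\le(1-\beta_t)^2\E\norm{\epsilon_{t-1}}^2+2\bar L^2\eta_{t-1}^2+2\beta_t^2\sigma^2 .
\]
Since $\beta_t=\eta_t=t^{-2/3}$, induction yields $\E\norm{\epsilon_t}^2\le K t^{-2/3}$, up to the constant bookkeeping that produces $K=6\bar L^2+2\sigma^2$. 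Hence $\E\norm{\epsilon_t}\le\sqrt K t^{-1/3}$ and $\sum\eta_t\E\norm{\epsilon_t}\le\sqrt K\sum t^{-1}\le\sqrt K(1+\log n)$. We also have $\sum\eta_t^2\le 4$ and $W_n\ge n^{1/3}$. Dividing through gives the first entry of the min.

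\textbf{Rates.} Finally, $n\asymp\tau$ by \eqref{eq:anytime-report-index}, and $M_n=O(1+\sigma\sqrt n)$. This gives $O(\sqrt{\log\tau}\,\tau^{-1/4})$ in the stochastic case and $O(\tau^{-1/2})$ when $\sigma=0$, since then $M_n$ is constant.
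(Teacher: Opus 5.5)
Your validation step, the reservoir identities, the variance-reduction induction (giving $K=6\bar L^2+2\sigma^2$, $\sum_t\eta_t^2\le4$, $W_n\ge n^{1/3}$) and the conversion to rates in $\tau$ are the paper's argument. You depart from the paper in the AdaGrad-Norm lemma. As written, that step does not give the stated inequality with the stated constants $C_A$ and $M_n$.

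\textbf{Where the constants diverge.} With the Ward--Wu--Bottou proxy $\tilde b_t$, the decorrelation error is controlled by $|1/\tilde b_t-1/b_t|\le(\norm{g_t-\nabla f(x_t)}+\sigma)/(b_t\tilde b_t)$ plus Young's inequality. This works under bounded variance alone. However, it naturally gives something like $\E\sum_t\norm{\nabla f(x_t)}^2/\tilde b_t\le 2\Delta_f+4\sigma^2+2L\,\E\log b_n$, or a version with a $\sigma\log b_n$ term. That bound has a larger $\sigma^2$ coefficient and no $\norm{\nabla f(x_1)}^2$ or $L^2$ terms, so it is not dominated by $C_A+4L\,\E\log b_n$ when $\sigma$ is large relative to $\Delta_f$ and $L$. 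Your claim that the proxy ``yields a bound of the form $C_A+4L\E\log b_n$'' is therefore unsupported.

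\textbf{What the paper does instead.} The terms $2\norm{\nabla f(x_1)}^2$ and $8L^2$ in $C_A$ come from the paper's route, which follows Wang et al.
\begin{itemize}
\item It keeps $b_{t-1}$ as the proxy and bounds the correlation term by conditional Cauchy--Schwarz as $\frac{\norm{\nabla f(x_t)}^2}{2b_{t-1}}+\frac{\norm{\nabla f(x_t)}^2+\sigma^2}{2}\E[\frac1{b_{t-1}}-\frac1{b_t}]$.
\item It absorbs the $\norm{\nabla f(x_t)}^2$ part through the potential $\norm{\nabla f(x_t)}^2/b_t$. Telescoping leaves $\norm{\nabla f(x_1)}^2$. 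The drift, by smoothness, costs $\frac{\norm{\nabla f(x_t)}^2}{4b_{t-1}}+L^2\norm{g_{t-1}}^2/b_{t-1}^3$, with $\sum_t\norm{g_{t-1}}^2/b_{t-1}^3\le2$.
\item $M_n$ then comes from the triangle inequality on $(1,g_1,\dots,g_n)$, the bound $\E\sqrt{\sum_t\norm{\nabla f(x_t)}^2}\le\sqrt{\E b_n\,\E\sum_t\norm{\nabla f(x_t)}^2/b_{t-1}}$, Young's inequality, and $\log u\le\sqrt u$.
\end{itemize}

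\textbf{A second issue.} Your Cauchy--Schwarz step needs $\sum_t\tilde b_t\le n\max_t\tilde b_t$. Since $\tilde b_t$ contains $\norm{\nabla f(x_t)}$, it is not monotone, so $\E\tilde b_n$ is not the quantity you must bound. With the proxy $b_{t-1}$ this disappears, because $b_{t-1}\le b_n$ holds pathwise. Your pathwise quadratic idea is in fact valid: $b_n^2\le1+2b_nS+2\sum_t\norm{g_t-\nabla f(x_t)}^2$ with $S=\sum_t\norm{\nabla f(x_t)}^2/b_{t-1}$. It too yields constants different from $M_n$.

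So your route proves a bound of the same shape, and hence the three rates. It does not prove \eqref{eq:anytime-general-guarantee} or the AdaGrad entry of \eqref{eq:anytime-mss-guarantee} with $C_A$ and $M_n$ as defined in the statement.
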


\section{Conclusion}

In this paper, we investigate stochastic non-convex optimization without knowing whether the oracle satisfies average smoothness or knowing the smoothness constants and variance bound. For a known total gradient budget $T$, we proposed a method that achieves expected gradient norm bounds of $O(T^{-1/3})$ under average smoothness, $O(T^{-1/4})$ under standard smoothness, and $O(T^{-1/2})$ in the noiseless setting. We also develop an anytime method with two paths, which attains the corresponding rates up to logarithmic factors in the stochastic settings and retains the $O(T^{-1/2})$ noiseless rate. Both methods use a constant number of gradient evaluations per round, include the cost of output selection, and avoid restarting from the initial point. Further work could investigate whether the logarithmic factors in the anytime bounds can be removed and whether the same guarantees can be achieved along a single path without knowing $T$.

\clearpage
\appendix

\section{Proof of Theorem~\ref{thm:serial}}\label{sec:proof-finite}

\subsection{Useful Lemmas}

\begin{lemma}
\label{lem:normalized-descent}
For any point $x$, vector $v$, and $\eta>0$, the update
$x^+=x-\eta\N(v)$ satisfies
\begin{equation}
 f(x^+)\le f(x)-\eta\norm{\nabla f(x)}
       +2\eta\norm{v-\nabla f(x)}+\frac L2\eta^2.
 \label{eq:normalized-descent}
\end{equation}
\end{lemma}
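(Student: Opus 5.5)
The plan is to start from the descent inequality given by Assumption~\ref{ass:smooth}. With $x^+=x-\eta\N(v)$ we have $\norm{x^+-x}\le\eta$, so
\[
 f(x^+)\le f(x)-\eta\inner{\nabla f(x)}{\N(v)}+\frac L2\eta^2 .
\]
It then suffices to show the pointwise inequality
\[
 -\inner{\nabla f(x)}{\N(v)}\le-\norm{\nabla f(x)}+2\norm{v-\nabla f(x)},
\]
and multiply it by $\eta>0$.

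First I dispose of the degenerate case $v=0$. Then $\N(v)=0$, so the left side is $0$. The right side is $-\norm{\nabla f(x)}+2\norm{\nabla f(x)}=\norm{\nabla f(x)}\ge0$, so the inequality holds.

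For $v\ne0$, I write $g=\nabla f(x)$ and $e=v-g$. The inner product splits as
\[
 \inner{g}{\N(v)}=\inner{v}{\N(v)}-\inner{e}{\N(v)}=\norm v-\inner{e}{\N(v)}\ge\norm v-\norm e,
\]
using Cauchy--Schwarz and $\norm{\N(v)}=1$. The triangle inequality gives $\norm v\ge\norm g-\norm e$. Combining the two, $\inner{g}{\N(v)}\ge\norm g-2\norm e$, which is the claimed pointwise bound. Substituting it into the descent inequality yields \eqref{eq:normalized-descent}.

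This argument needs no stochastic structure and is entirely deterministic, so no step is a real obstacle. The only points requiring care are the convention $\N(0)=0$ and the reliance on $\norm{x^+-x}\le\eta$ rather than equality; both are handled above.
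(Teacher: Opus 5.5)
Your proof is correct and follows the paper's argument essentially step for step. Both combine the $L$-smooth descent inequality, using $\norm{\N(v)}\le1$, with the pointwise bound $\inner{\nabla f(x)}{\N(v)}\ge\norm{\nabla f(x)}-2\norm{v-\nabla f(x)}$ from Cauchy--Schwarz and the triangle inequality. Your explicit verification of the $v=0$ case spells out what the paper only asserts.
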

\begin{proof}
If $v\ne0$, Cauchy--Schwarz and the triangle inequality give
\begin{align*}
 \inner{\nabla f(x)}{\N(v)}
 &=\inner{v}{\frac{v}{\norm v}}
   +\inner{\nabla f(x)-v}{\frac{v}{\norm v}}\\
 &\ge\norm v-\norm{\nabla f(x)-v}\\
 &\ge\norm{\nabla f(x)}-2\norm{v-\nabla f(x)}.
\end{align*}
For $v=0$, the inequality still holds. Thus,
\begin{align*}
 f(x^+)
 &\le f(x)+\inner{\nabla f(x)}{x^+-x}
           +\frac L2\norm{x^+-x}^2\\
 &=f(x)-\eta\inner{\nabla f(x)}{\N(v)}
           +\frac L2\eta^2\norm{\N(v)}^2\\
 &\le f(x)-\eta\norm{\nabla f(x)}
           +2\eta\norm{v-\nabla f(x)}+\frac L2\eta^2,
\end{align*}
where the last step uses $\norm{\N(v)}\le1$.
\end{proof}

\begin{lemma}
\label{lem:selection}
Let $X_1,\ldots,X_m$ be random candidate points. At each point, average
$b$ independent stochastic gradients to obtain $\widehat g_i$.
If $J\in\arg\min_i\norm{\widehat g_i}$, then
\begin{equation}
 \begin{split}
 \E\norm{\nabla f(X_J)}
 &\le\E\min_{1\le i\le m}\norm{\nabla f(X_i)}
          +\frac{m\sigma}{\sqrt b}\\
 &\le\min_{1\le i\le m}\E\norm{\nabla f(X_i)}
          +\frac{m\sigma}{\sqrt b}.
 \end{split}
 \label{eq:selection}
\end{equation}
\end{lemma}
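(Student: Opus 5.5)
We prove Lemma~\ref{lem:selection} by conditioning on the candidates and controlling the estimation errors $e_i=\widehat g_i-\nabla f(X_i)$. Let $\mathcal F$ be the $\sigma$-field generated by $X_1,\ldots,X_m$ (and everything used to produce them). Since the validation gradients are fresh and independent of $\mathcal F$, conditionally on $\mathcal F$ each $\widehat g_i$ is an average of $b$ independent unbiased samples at the fixed point $X_i$. Assumption~\ref{ass:variance} then gives $\E[\norm{e_i}^2\mid\mathcal F]\le\sigma^2/b$, and Jensen's inequality yields $\E[\norm{e_i}\mid\mathcal F]\le\sigma/\sqrt b$.

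The core is a deterministic comparison valid for every realization. Let $I\in\arg\min_i\norm{\nabla f(X_i)}$. By the triangle inequality and the definition of $J$,
\[
\norm{\nabla f(X_J)}\le\norm{\widehat g_J}+\norm{e_J}\le\norm{\widehat g_I}+\norm{e_J}\le\norm{\nabla f(X_I)}+\norm{e_I}+\norm{e_J}.
\]
The indices $I,J$ are random and $J$ depends on the errors, so we cannot bound $\E\norm{e_J}$ by $\sigma/\sqrt b$ directly. We therefore bound crudely: $\norm{e_I}+\norm{e_J}\le 2\max_i\norm{e_i}\le 2\sum_i\norm{e_i}$. For $m\ge2$ a sharper estimate is available. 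If $I=J$, the two error terms cancel, since then $\norm{\nabla f(X_J)}=\min_i\norm{\nabla f(X_i)}$. If $I\ne J$, we have $\norm{e_I}+\norm{e_J}\le\sum_i\norm{e_i}$. In either case
\[
\norm{\nabla f(X_J)}\le\min_i\norm{\nabla f(X_i)}+\sum_{i=1}^m\norm{e_i}.
\]

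Taking conditional and then total expectation gives
\[
\E\norm{\nabla f(X_J)}\le\E\min_i\norm{\nabla f(X_i)}+\frac{m\sigma}{\sqrt b},
\]
which is the first inequality in \eqref{eq:selection}. The second inequality follows because $\min_i\norm{\nabla f(X_i)}\le\norm{\nabla f(X_k)}$ for each fixed $k$, so taking expectations and then the minimum over $k$ gives $\E\min_i\norm{\nabla f(X_i)}\le\min_k\E\norm{\nabla f(X_k)}$.

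The only delicate step is the data-dependent index $J$. It is handled by the pathwise case split ($I=J$ versus $I\ne J$) together with the replacement of the selected errors by the full sum, whose expectation is computable without knowing which index was chosen.

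In the algorithms the validation gradients for different candidates are collected in different rounds. The conditioning argument still applies, because each candidate is fixed by the time its validation samples are drawn and those samples are fresh.
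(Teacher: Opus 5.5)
Your proof is correct and follows the paper's argument essentially step for step. You bound $\E\norm{\widehat g_i-\nabla f(X_i)}\le\sigma/\sqrt b$ for fixed candidates, then use the pathwise chain $\norm{\nabla f(X_J)}\le\norm{\nabla f(X_I)}+\norm{e_I}+\norm{e_J}$ with the case split $I=J$ versus $I\ne J$ to dominate the selected errors by $\sum_k\norm{e_k}$, and take expectations. The preliminary ``crude'' factor-$2$ bound is unnecessary, since it would only give $2m\sigma/\sqrt b$, but you replace it with the case split, which is exactly what the paper does.
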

\begin{proof}
For fixed candidates, the validation average is
\[
 \widehat g_i=\frac1b\sum_{j=1}^b g(X_i;\xi_{i,j}).
\]
Expanding the square and then averaging over the candidates yields
\begin{align*}
 \E\norm{\widehat g_i-\nabla f(X_i)}^2
 =\frac1{b^2}\E\norm{
       \sum_{j=1}^b[g(X_i;\xi_{i,j})-\nabla f(X_i)]}^2
 \le\frac{\sigma^2}{b}.
\end{align*}
Consequently, Cauchy--Schwarz gives
\[
 \E\norm{\widehat g_i-\nabla f(X_i)}
 \le\sqrt{\E\norm{\widehat g_i-\nabla f(X_i)}^2}
 \le\frac{\sigma}{\sqrt b}.
\]

Choose
$i\in\arg\min_{1\le k\le m}\norm{\nabla f(X_k)}$.
If $J=i$, then $\norm{\nabla f(X_J)}=\min_k\norm{\nabla f(X_k)}$.
If $J\ne i$, 
\begin{align*}
 \norm{\nabla f(X_J)}
 &\le\norm{\widehat g_J}
       +\norm{\widehat g_J-\nabla f(X_J)}\\
 &\le\norm{\widehat g_i}
       +\norm{\widehat g_J-\nabla f(X_J)}\\
 &\le\norm{\nabla f(X_i)}
       +\norm{\widehat g_i-\nabla f(X_i)}
       +\norm{\widehat g_J-\nabla f(X_J)}\\
 &\le\min_k\norm{\nabla f(X_k)}
       +\sum_{k=1}^m\norm{\widehat g_k-\nabla f(X_k)}.
\end{align*}
Taking expectations gives
\begin{align*}
 \E\norm{\nabla f(X_J)}
 &\le\E\min_k\norm{\nabla f(X_k)}
       +\sum_{k=1}^m\E\norm{\widehat g_k-\nabla f(X_k)}\\
 &\le\E\min_k\norm{\nabla f(X_k)}+\frac{m\sigma}{\sqrt b}.
\end{align*}
Finally, for every fixed index $j$,
\[
 \min_k\norm{\nabla f(X_k)}\le\norm{\nabla f(X_j)}
 \quad\Longrightarrow\quad
 \E\min_k\norm{\nabla f(X_k)}\le\E\norm{\nabla f(X_j)}.
\]
Taking the minimum over $j$ proves the second inequality in
\eqref{eq:selection}.
\end{proof}

\subsection{Direct-gradient step}

\begin{lemma}
\label{lem:accepted-pair}
Let $G,G'$ be independent random vectors with mean $\mu$ and
$\E\norm{G-\mu}^2\le\sigma^2$. For every $a\ge0$, let
$p=\Pr\{\norm{G-G'}>a\}$. Then
\begin{equation}
 \E\bigl[\norm{G-\mu}\mathbf1_{\{\norm{G-G'}\le a\}}\bigr]
 \le a+\sqrt2\,\sigma\sqrt p.
 \label{eq:accepted-pair}
\end{equation}
\end{lemma}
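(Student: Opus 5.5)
We prove \eqref{eq:accepted-pair} by splitting $G-\mu$ into the part controlled by the test and the part controlled by the other sample. On the event $A=\{\norm{G-G'}\le a\}$, the triangle inequality gives
\[
 \norm{G-\mu}\le\norm{G-G'}+\norm{G'-\mu}\le a+\norm{G'-\mu}.
\]
Hence
\[
 \E\bigl[\norm{G-\mu}\mathbf1_A\bigr]\le a\Pr(A)+\E\bigl[\norm{G'-\mu}\mathbf1_A\bigr]\le a+\E\bigl[\norm{G'-\mu}\mathbf1_A\bigr].
\]
This alone only yields $a+\sigma$, so the remaining term must also be shown to be small when $p$ is small.

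The plan is to use a symmetric decomposition rather than bound the $G'$ term directly. Write
\[
 G-\mu=\tfrac12(G-G')+\tfrac12\bigl[(G-\mu)+(G'-\mu)\bigr].
\]
On $A$, the first piece has norm at most $a/2$. For the second piece, set $S=(G-\mu)+(G'-\mu)$ and $D=G-G'$. By independence and zero means, $\E\norm{S}^2=\E\norm{D}^2=\E\norm{G-\mu}^2+\E\norm{G'-\mu}^2\le2\sigma^2$.

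We still need the $\sqrt p$ factor, and it should come from the tail of $D$. The key step is to relate $S$ to $D$: the pair $(G,G')$ is exchangeable, but $S$ and $D$ are not equal in distribution in general, so a direct comparison does not work. Instead, I would write $S=2(G'-\mu)+D$. On $A$ this gives
\[
 \norm{G-\mu}=\norm{(G'-\mu)+D}\le\norm{G'-\mu}+a,
\]
which is the same as the first bound, so the difficulty is entirely in bounding $\E[\norm{G'-\mu}\mathbf1_A]$.

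The main obstacle is that no bound of the form $\E[\norm{G-\mu}\mathbf1_A]\le a+C\sigma\sqrt p$ can hold with $\Pr(A)$ close to $1$ unless the deviation is genuinely small there. The argument should use the intuition that a small typical $\norm{G-G'}$ forces a small typical $\norm{G-\mu}$. I would argue via the conditional law. Since $\E[G'-\mu\mid G]=0$, Jensen's inequality gives $\norm{G-\mu}\le\E[\norm{G-G'}\mid G]$. Then
\[
 \E\bigl[\norm{G-\mu}\mathbf1_A\bigr]\le\E\bigl[\norm{G-G'}\,h(G)\bigr],
\]
where $h(G)=\Pr(A\mid G)$.

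Next, split $\norm{G-G'}=\norm{G-G'}\mathbf1_A+\norm{G-G'}\mathbf1_{A^c}$. The first part contributes at most $a$. For the second part, apply Cauchy--Schwarz together with $h\le1$:
\[
 \E\bigl[\norm{D}\mathbf1_{A^c}\bigr]\le\sqrt{\E\norm{D}^2}\sqrt{\Pr(A^c)}\le\sqrt2\,\sigma\sqrt p.
\]
Combining the two parts gives $a+\sqrt2\sigma\sqrt p$, which is exactly \eqref{eq:accepted-pair}. The step to verify carefully is the Jensen step that inserts the conditional probability $h(G)$: because $\mathbf1_A$ depends on $G'$, pulling it through the conditional expectation is the delicate point. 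I would first try to justify it by bounding $\mathbf1_A\le1$ only where needed and keeping $h(G)\le1$; if that fails, fall back on symmetrizing over the swap $G\leftrightarrow G'$.
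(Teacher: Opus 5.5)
Your final chain (conditional Jensen $\norm{G-\mu}\le\E[\norm{G-G'}\mid G]$, splitting $\norm{G-G'}$ at $a$, and Cauchy--Schwarz on the tail using $\E\norm{G-G'}^2\le2\sigma^2$) is correct and is essentially the paper's proof. The step you flag as delicate is harmless, since $\norm{G-\mu}$ is a function of $G$: the tower property gives $\E[\norm{G-\mu}\mathbf1_A]=\E[\norm{G-\mu}h(G)]$, and $h(G)$ passes through $\E[\cdot\mid G]$; the paper avoids it anyway by discarding the indicator at the outset via $\E[\norm{G-\mu}\mathbf1_A]\le\E\norm{G-\mu}$, so your earlier exploratory decompositions can simply be dropped.
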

\begin{proof}
By independence, $\E_{G'}[G-G']=G-\mu$ when $G$ is fixed. Thus Jensen's
inequality gives
\[
 \E\norm{G-\mu}
 =\E_G\norm{\E_{G'}[G-G']}
 \le\E_G\E_{G'}\norm{G-G'}
 =\E\norm{G-G'}.
\]
Moreover,
\begin{align*}
 \E\norm{G-G'}^2
 &=\E\norm{(G-\mu)-(G'-\mu)}^2\\
 &=\E\norm{G-\mu}^2+\E\norm{G'-\mu}^2
             -2\E\inner{G-\mu}{G'-\mu}\\
 &\le2\sigma^2,
\end{align*}
where
$\E\inner{G-\mu}{G'-\mu}
=\inner{\E[G-\mu]}{\E[G'-\mu]}=0$.
Splitting the expectation at $a$, we obtain
\begin{align*}
 \E\norm{G-G'}
 &=\E\bigl[\norm{G-G'}\mathbf1_{\{\norm{G-G'}\le a\}}\bigr]
   +\E\bigl[\norm{G-G'}\mathbf1_{\{\norm{G-G'}>a\}}\bigr]\\
 &\le a+\sqrt{\E\norm{G-G'}^2}\,
              \sqrt{\E\mathbf1_{\{\norm{G-G'}>a\}}}\\
 &\le a+\sqrt2\,\sigma\sqrt p,
\end{align*}
where the second line uses Cauchy--Schwarz. Therefore,
\[
 \E\bigl[\norm{G-\mu}\mathbf1_{\{\norm{G-G'}\le a\}}\bigr]
 \le\E\norm{G-\mu}
 \le\E\norm{G-G'}
 \le a+\sqrt2\,\sigma\sqrt p.
\]
\end{proof}
Let $S_t$ be $1$ if every test before round $t$ has passed, and $0$
otherwise. Thus $S_t=1$ when round $t$ begins in the direct-gradient
mode. At the current point, write
\[
 p_t=\Pr_{G_t,G'_t}\{\norm{G_t-G'_t}>n^{-1/2}\}.
\]
Fixing the earlier
samples and averaging that pair first gives
\begin{align*}
 \E[S_tp_t]
 &=\E\left[S_t\,\E_{G_t,G'_t}
          \mathbf1_{\{\norm{G_t-G'_t}>n^{-1/2}\}}\right]\\
 &=\E\left[S_t\mathbf1_{\{\norm{G_t-G'_t}>n^{-1/2}\}}\right]
 =\Pr\{\text{first failure at }t\}.
\end{align*}
The possible first-failure events are disjoint,
\begin{equation}
 \sum_{t=1}^n\E[S_tp_t]
 =\sum_{t=1}^n\Pr\{\text{first failure at }t\}
 \le1.
 \label{eq:serial-first-failure}
\end{equation}
Applying Cauchy--Schwarz first to each expectation and then to the sum,
we have
\begin{align}
 \sum_{t=1}^n\E[S_t\sqrt{p_t}]
 &=\sum_{t=1}^n\E[\sqrt{S_t}\sqrt{S_tp_t}]\nonumber\\
 &\le\sum_{t=1}^n\sqrt{\E S_t}\sqrt{\E[S_tp_t]}\nonumber\\
 &\le\left(\sum_{t=1}^n\E S_t\right)^{1/2}
       \left(\sum_{t=1}^n\E[S_tp_t]\right)^{1/2}
 \le\sqrt n.
 \label{eq:serial-survival}
\end{align}

For a direct step, $S_t=1$, $v_t=G_t$, and
$\eta_t=\eta_D$. Lemma~\ref{lem:accepted-pair} with $a=n^{-1/2}$ gives
\begin{align}
 \E\sum_{\substack{1\le t\le n:\\ \text{direct step}}}
           \eta_D\norm{v_t-\nabla f(x_t)}
 &=\eta_D\sum_{t=1}^n
   \E\left[S_t\mathbf1_{\{\norm{G_t-G'_t}\le n^{-1/2}\}}
                         \norm{G_t-\nabla f(x_t)}\right]\nonumber\\
 &\le\eta_D\sum_{t=1}^n
              \E\bigl[S_t(n^{-1/2}+\sqrt2\sigma\sqrt{p_t})\bigr]\nonumber\\
 &=\eta_D n^{-1/2}\sum_{t=1}^n\E S_t
       +\sqrt2\sigma\eta_D\sum_{t=1}^n\E[S_t\sqrt{p_t}]\nonumber\\
 &\le n^{-1/2}n^{-1/2}n
       +\sqrt2\sigma n^{-1/2}\sqrt n
 =1+\sqrt2\sigma,
 \label{eq:serial-direct-error}
\end{align}
where the last line uses $\eta_D=n^{-1/2}$ and
\eqref{eq:serial-survival}.

\subsection{Momentum step}

\begin{lemma}
\label{lem:ema}
Consider $m$ normalized momentum updates with 
parameters $0<\beta\le1$ and $\eta>0$:
\[
 v_1=g(x_1;\xi_1),\qquad
 v_t=(1-\beta)v_{t-1}+\beta g(x_t;\xi_t)\quad(t\ge2),
 \qquad x_{t+1}=x_t-\eta\N(v_t).
\]
Then, we can ensure
\begin{equation}
 \E\norm{v_t-\nabla f(x_t)}
 \le\sigma(1-\beta)^{t-1}+\sigma\sqrt\beta+\frac{L\eta}{\beta},
 \qquad 1\le t\le m.
 \label{eq:ema-tracking}
\end{equation}
\end{lemma}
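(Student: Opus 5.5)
The plan is the standard error decomposition for exponential moving averages. Write $\epsilon_t=v_t-\nabla f(x_t)$, $\zeta_t=g(x_t;\xi_t)-\nabla f(x_t)$, and $Z_t=\nabla f(x_{t-1})-\nabla f(x_t)$ for $t\ge2$. The update $v_t=(1-\beta)v_{t-1}+\beta g(x_t;\xi_t)$ gives, for $t\ge2$,
\[
 \epsilon_t=(1-\beta)\epsilon_{t-1}+(1-\beta)Z_t+\beta\zeta_t,
\]
with $\epsilon_1=\zeta_1$. Unrolling the recursion yields
\[
 \epsilon_t=(1-\beta)^{t-1}\zeta_1+\beta\sum_{s=2}^t(1-\beta)^{t-s}\zeta_s+\sum_{s=2}^t(1-\beta)^{t-s+1}Z_s .
\]
I will bound the expected norm of each of the three pieces separately and then combine them with the triangle inequality.

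\textbf{Initial term.} By Jensen and Assumption~\ref{ass:variance}, $\E\norm{\zeta_1}\le\sigma$. Hence this piece contributes at most $\sigma(1-\beta)^{t-1}$.

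\textbf{Drift term.} Because the step is normalized, $\norm{x_s-x_{s-1}}\le\eta$ deterministically. Assumption~\ref{ass:smooth} then gives $\norm{Z_s}\le L\eta$. The geometric sum satisfies
\[
 \sum_{s=2}^t(1-\beta)^{t-s+1}\le\frac{1-\beta}{\beta}\le\frac1\beta,
\]
so this piece is at most $L\eta/\beta$, deterministically.

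\textbf{Noise term.} This is the only step needing care, since $x_s$ depends on $\xi_1,\dots,\xi_{s-1}$. The $\zeta_s$ are nevertheless a martingale difference sequence with respect to the filtration generated by the samples: $x_s$ is measurable before $\xi_s$ is drawn, so $\E[\zeta_s\mid\mathcal F_{s-1}]=0$. Consequently the cross terms $\E\inner{\zeta_s}{\zeta_r}$ vanish for $s\ne r$ by conditioning on the later index. Using Jensen and then the conditional variance bound,
\[
 \E\norm{\beta\textstyle\sum_s(1-\beta)^{t-s}\zeta_s}\le\beta\Bigl(\sum_{s}(1-\beta)^{2(t-s)}\sigma^2\Bigr)^{1/2}\le\frac{\beta\sigma}{\sqrt{1-(1-\beta)^2}}.
\]
Since $1-(1-\beta)^2=\beta(2-\beta)\ge\beta$ for $\beta\le1$, the right-hand side is at most $\sigma\sqrt\beta$.

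Adding the three bounds gives \eqref{eq:ema-tracking}. The main obstacle is justifying the vanishing cross terms when the iterates are adaptive. The filtration argument handles this, since each $\zeta_s$ is evaluated at a point fixed before $\xi_s$ is drawn.
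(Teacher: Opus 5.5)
Your proof is correct and follows the paper's argument: the same unrolled decomposition into initial noise, martingale-difference noise, and drift, with the drift bounded deterministically by $L\eta/\beta$ and the noise cross terms removed by conditioning on the later sample. The only cosmetic difference is that you bound the initial term $(1-\beta)^{t-1}\zeta_1$ separately via the triangle inequality, whereas the paper bounds the second moment of the full noise sum and then splits it using $\sqrt{a+b}\le\sqrt a+\sqrt b$; both routes give the same constant.
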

\begin{proof}
Write $e_t=v_t-\nabla f(x_t)$ and
$\varepsilon_t=g(x_t;\xi_t)-\nabla f(x_t)$. At $t=1$,
$\E\norm{e_1}\le\sigma$.
For $t\ge2$,
\begin{align*}
 e_t
 &=(1-\beta)v_{t-1}+\beta g(x_t;\xi_t)-\nabla f(x_t)\\
 &=(1-\beta)[v_{t-1}-\nabla f(x_{t-1})]
       +\beta[g(x_t;\xi_t)-\nabla f(x_t)]+(1-\beta)[\nabla f(x_{t-1})-\nabla f(x_t)]\\
 &=(1-\beta)e_{t-1}+\beta\varepsilon_t
       +(1-\beta)[\nabla f(x_{t-1})-\nabla f(x_t)].
\end{align*}
Since $e_1=\varepsilon_1$, repeated substitution gives
\begin{align*}
 e_t
 =(1-\beta)^{t-1}\varepsilon_1
       +\beta\sum_{k=2}^t(1-\beta)^{t-k}\varepsilon_k+\sum_{k=2}^t(1-\beta)^{t-k+1}
                [\nabla f(x_{k-1})-\nabla f(x_k)].
\end{align*}
For $i<j$, averaging the sample $\xi_j$ first yields
\[
 \E\inner{\varepsilon_i}{\varepsilon_j}
 =\E\inner{\varepsilon_i}
           {\E_{\xi_j}[g(x_j;\xi_j)-\nabla f(x_j)]}=0.
\]
Thus the squared norm of the noise sum satisfies
\begin{align*}
 &\E\norm{(1-\beta)^{t-1}\varepsilon_1
       +\beta\sum_{k=2}^t(1-\beta)^{t-k}\varepsilon_k}^2\\
 &\quad=(1-\beta)^{2(t-1)}\E\norm{\varepsilon_1}^2
       +\beta^2\sum_{k=2}^t(1-\beta)^{2(t-k)}
                              \E\norm{\varepsilon_k}^2\\
 &\quad\le\sigma^2\left[(1-\beta)^{2(t-1)}
       +\beta^2\sum_{j=0}^{t-2}(1-\beta)^{2j}\right]\\
 &\quad\le\sigma^2\left[(1-\beta)^{2(t-1)}
       +\frac{\beta^2}{1-(1-\beta)^2}\right]\\
 &\quad=\sigma^2\left[(1-\beta)^{2(t-1)}
       +\frac{\beta}{2-\beta}\right]
 \le\sigma^2[(1-\beta)^{2(t-1)}+\beta].
\end{align*}
For the remaining gradient-difference sum,
smoothness and $\norm{x_k-x_{k-1}}\le\eta$ give
\begin{align*}
 \norm{\sum_{k=2}^t(1-\beta)^{t-k+1}
                       [\nabla f(x_{k-1})-\nabla f(x_k)]}
 &\le\sum_{k=2}^t(1-\beta)^{t-k+1}
                         L\norm{x_{k-1}-x_k}\\
 &\le L\eta\sum_{j=1}^{t-1}(1-\beta)^j
 \le L\eta\sum_{j=0}^{\infty}(1-\beta)^j
 =\frac{L\eta}{\beta}.
\end{align*}
By the triangle inequality and Cauchy--Schwarz,
\begin{align*}
 \E\norm{e_t}
 &\le\sqrt{\E\norm{(1-\beta)^{t-1}\varepsilon_1
       +\beta\sum_{k=2}^t(1-\beta)^{t-k}\varepsilon_k}^2}
       +\frac{L\eta}{\beta}\\
 &\le\sigma\sqrt{(1-\beta)^{2(t-1)}+\beta}
       +\frac{L\eta}{\beta}\\
 &\le\sigma(1-\beta)^{t-1}+\sigma\sqrt\beta+\frac{L\eta}{\beta}.
 \qedhere
\end{align*}
\end{proof}

Suppose the first failure occurs at round $s$. Momentum starts at $x_s$ and runs for
$n-s+1$ rounds. Applying Lemma~\ref{lem:ema} and averaging these samples gives
\begin{align*}
 \E\sum_{t=s}^n\eta_S\norm{v_t-\nabla f(x_t)}
 &\le
       \eta_S\sigma\sum_{j=0}^{n-s}(1-\beta_S)^j
       +(n-s+1)\eta_S\sigma\sqrt{\beta_S}
       +\frac{(n-s+1)L\eta_S^2}{\beta_S}\\
 &\le
       \frac{\eta_S\sigma}{\beta_S}
       +n\eta_S\sigma\sqrt{\beta_S}
       +\frac{nL\eta_S^2}{\beta_S}\\
 &=
       \frac{\sigma n^{-3/4}}{n^{-1/2}}
       +n n^{-3/4}\sigma n^{-1/4}
       +\frac{nL n^{-3/2}}{n^{-1/2}}\\
 &=\sigma n^{-1/4}+\sigma+L.
\end{align*}
This bound holds for every $s$ and every preceding sample sequence.
If the test never fails, there are no momentum updates. Thus, averaging
over the first failure and its preceding samples,
\begin{align*}
 \E\sum_{\substack{1\le t\le n:\\ \text{momentum step}}}
                         \eta_S\norm{v_t-\nabla f(x_t)}\le\sigma n^{-1/4}+\sigma+L
 \le2\sigma+L.
\end{align*}
The direct and momentum rounds partition the first phase. Therefore,
\begin{align}
 \E\sum_{t=1}^n\eta_t\norm{v_t-\nabla f(x_t)}
 \le(1+\sqrt2\sigma)+(2\sigma+L)=1+(2+\sqrt2)\sigma+L.
 \label{eq:serial-G-error}
\end{align}

Since $\eta_S\le\eta_t\le\eta_D$, we have
\begin{equation}
 W_n=\sum_{t=1}^n\eta_t\ge n\eta_S=n^{1/4},
 \qquad
 \sum_{t=1}^n\eta_t^2\le n\eta_D^2=1.
 \label{eq:serial-G-weights}
\end{equation}
Rearranging Lemma~\ref{lem:normalized-descent} and summing over
$t=1,\ldots,n$ gives
\begin{align}
 &\E[f(x_{n+1})-f_*]
       +\E\sum_{t=1}^n\eta_t\norm{\nabla f(x_t)}\nonumber\\
 &\quad\le f(x_1)-f_*
       +2\E\sum_{t=1}^n\eta_t\norm{v_t-\nabla f(x_t)}
       +\frac L2\E\sum_{t=1}^n\eta_t^2\nonumber\\
 &\quad\le\Delta_f+2[1+(2+\sqrt2)\sigma+L]+\frac L2\nonumber\\
 &\quad=\Delta_f+2+\frac52L+(4+2\sqrt2)\sigma=C_G.
 \label{eq:serial-G-descent}
\end{align}
Both terms on the left are nonnegative, and hence
\begin{equation}
 \E[f(x_{n+1})-f_*]\le C_G.
 \label{eq:serial-G-endpoint}
\end{equation}

For a fixed first-phase optimization path, the
probability that the reservoir selects index $i$ is
\begin{align*}
 \frac{\eta_i}{W_i}
       \prod_{t=i+1}^n\left(1-\frac{\eta_t}{W_t}\right)=\frac{\eta_i}{W_i}
       \prod_{t=i+1}^n\frac{W_t-\eta_t}{W_t}
 =\frac{\eta_i}{W_i}\prod_{t=i+1}^n\frac{W_{t-1}}{W_t}
 =\frac{\eta_i}{W_i}\frac{W_i}{W_n}
 =\frac{\eta_i}{W_n}.
\end{align*}
This
identity also applies to the random step sizes, after averaging over
the path:
\begin{align}
 \E\norm{\nabla f(X_G)}
 =\E\left[\sum_{t=1}^n
                \frac{\eta_t}{W_n}\norm{\nabla f(x_t)}\right]\le n^{-1/4}\E\sum_{t=1}^n
                          \eta_t\norm{\nabla f(x_t)}
 \le C_G n^{-1/4},
 \label{eq:serial-G-candidate}
\end{align}
where the two inequalities use \eqref{eq:serial-G-weights} and
\eqref{eq:serial-G-descent}, respectively.

When $\sigma=0$, each sampled gradient equals the true gradient. Every
test passes, and
\[
 v_t=\nabla f(x_t),\qquad \eta_t=\eta_D=n^{-1/2},
 \qquad W_n=n\eta_D=\sqrt n.
\]
The normalized descent inequality becomes
\[
 \eta_D\norm{\nabla f(x_t)}
 \le f(x_t)-f(x_{t+1})+\frac L2\eta_D^2.
\]
Summing gives
\[
 \eta_D\sum_{t=1}^n\norm{\nabla f(x_t)}
 \le f(x_1)-f(x_{n+1})+\frac L2n\eta_D^2
 \le\Delta_f+\frac L2.
\]
All reservoir weights are equal, so
\begin{equation}
 \E\norm{\nabla f(X_G)}
 =\frac1n\sum_{t=1}^n\norm{\nabla f(x_t)}
 \le\frac{\Delta_f+L/2}{n\eta_D}
 =(\Delta_f+L/2)n^{-1/2}.
 \label{eq:serial-G-noiseless}
\end{equation}

\subsection{The variance reduction step}

\begin{lemma}
\label{lem:vr}
Consider a phase with $t=1,\ldots,n$ and updates
\[
 \begin{split}
 v_1&=g(x_1;\xi_1),\\
 v_t&=g(x_t;\xi_t)+(1-\beta)
              [v_{t-1}-g(x_{t-1};\xi_t)]\quad(t\ge2),\\
 x_{t+1}&=x_t-\eta\N(v_t).
 \end{split}
\]
Under Assumption~\ref{ass:average}, with $\beta=\eta=n^{-2/3}$,
\begin{equation}
 \E\norm{v_t-\nabla f(x_t)}
 \le\sigma(1-\beta)^{t-1}
       +\sqrt{2(\sigma^2+\bar L^2)}\,n^{-1/3}.
 \label{eq:vr-tracking}
\end{equation}
\end{lemma}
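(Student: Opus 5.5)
We mirror the proof of Lemma~\ref{lem:ema}: write $e_t=v_t-\nabla f(x_t)$, derive a linear recursion, unroll it, and bound the resulting martingale sum in second moment.

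\textbf{Recursion.} For $t\ge2$, substituting the update gives
\[
 e_t=(1-\beta)e_{t-1}+\beta\varepsilon_t+(1-\beta)Z_t,
\]
where
\[
 \varepsilon_t=g(x_t;\xi_t)-\nabla f(x_t),\qquad
 Z_t=\bigl[g(x_t;\xi_t)-g(x_{t-1};\xi_t)\bigr]-\bigl[\nabla f(x_t)-\nabla f(x_{t-1})\bigr].
\]
The key observation is that $x_t$ and $x_{t-1}$ are determined by $\xi_1,\ldots,\xi_{t-1}$, while $\xi_t$ is fresh. Hence $\E_{\xi_t}[\beta\varepsilon_t+(1-\beta)Z_t]=0$.

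\textbf{Unrolling.} Set $u_t=\beta\varepsilon_t+(1-\beta)Z_t$. Using $e_1=\varepsilon_1$,
\[
 e_t=(1-\beta)^{t-1}\varepsilon_1+\sum_{k=2}^t(1-\beta)^{t-k}u_k .
\]
The terms $\varepsilon_1,u_2,\ldots,u_t$ form a martingale difference sequence, so all cross terms vanish exactly as in Lemma~\ref{lem:ema}. I would split off the initial term with the triangle inequality, which gives the contribution $\sigma(1-\beta)^{t-1}$ directly. Jensen's inequality then bounds the rest by the square root of $\sum_{k}(1-\beta)^{2(t-k)}\E\norm{u_k}^2$.

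\textbf{Bounding $\E\norm{u_k}^2$.} Use $\norm{a+b}^2\le2\norm a^2+2\norm b^2$ to get
\[
 \E\norm{u_k}^2\le 2\beta^2\sigma^2+2\E\norm{Z_k}^2 .
\]
Since $Z_k$ is a centered random vector, its variance is at most its second moment. Average smoothness (Assumption~\ref{ass:average}) therefore gives
\[
 \E\norm{Z_k}^2\le\E\norm{g(x_k;\xi_k)-g(x_{k-1};\xi_k)}^2\le\bar L^2\norm{x_k-x_{k-1}}^2\le\bar L^2\eta^2 .
\]
With $\eta=\beta$ this yields $\E\norm{u_k}^2\le2\beta^2(\sigma^2+\bar L^2)$.

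\textbf{Summing.} The geometric sum satisfies $\sum_{j\ge0}(1-\beta)^{2j}\le1/\beta$, so the total second moment is at most $2\beta(\sigma^2+\bar L^2)$. Its square root is $\sqrt{2(\sigma^2+\bar L^2)}\,\beta^{1/2}$, and with $\beta=n^{-2/3}$ this equals $\sqrt{2(\sigma^2+\bar L^2)}\,n^{-1/3}$. Adding the initial term gives \eqref{eq:vr-tracking}.

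\textbf{Main obstacle.} The delicate step is the orthogonality argument. Each $u_k$ depends on the path only through $x_k$ and $x_{k-1}$, which are measurable with respect to earlier samples, and on the single fresh sample $\xi_k$. The same $\xi_k$ is used at both points $x_k$ and $x_{k-1}$, and this is what makes average smoothness applicable. It must be checked that the normalized update does not break this measurability. It does not, since $x_{k}$ depends only on $v_{k-1}$, which uses $\xi_1,\ldots,\xi_{k-1}$. Once that is verified, everything else is a routine geometric-series calculation.
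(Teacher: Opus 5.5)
Your proof is correct and is essentially the paper's argument. The paper recurses on $\E\norm{e_t}^2$ directly and splits off the initial term at the end via $\sqrt{a+b}\le\sqrt a+\sqrt b$, whereas you unroll the vector recursion and use the triangle inequality first; the ingredients (orthogonality from the fresh sample, the $2\norm{a}^2+2\norm{b}^2$ split, average smoothness with $\norm{x_k-x_{k-1}}\le\eta$, and $\sum_j(1-\beta)^{2j}\le1/\beta$) and the final constants are the same, though your justification for $\E\norm{Z_k}^2\le\E\norm{g(x_k;\xi_k)-g(x_{k-1};\xi_k)}^2$ should say that $Z_k$ is the conditionally centered version of that difference, so its second moment is the difference's variance.
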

\begin{proof}
Let $e_t=v_t-\nabla f(x_t)$. At $t=1$, the initialization gives
$\E\norm{e_1}^2\le\sigma^2$, so \eqref{eq:vr-tracking} holds.
For $t\ge2$, the estimator update gives
\begin{align*}
 e_t
 ={}&g(x_t;\xi_t)+(1-\beta)
                   [v_{t-1}-g(x_{t-1};\xi_t)]-\nabla f(x_t)\\
 ={}&(1-\beta)e_{t-1}
       +\beta[g(x_t;\xi_t)-\nabla f(x_t)]\\
 &+(1-\beta)[g(x_t;\xi_t)-g(x_{t-1};\xi_t)
                    -\nabla f(x_t)+\nabla f(x_{t-1})].
\end{align*}
By unbiasedness, we have
\[
 \E_{\xi_t}[g(x_t;\xi_t)-\nabla f(x_t)]=0,
 \qquad
 \E_{\xi_t}[g(x_t;\xi_t)-g(x_{t-1};\xi_t)]
       =\nabla f(x_t)-\nabla f(x_{t-1}).
\]
The gradient difference satisfies
\begin{align*}
 &\E_{\xi_t}\norm{g(x_t;\xi_t)-g(x_{t-1};\xi_t)
                      -\nabla f(x_t)+\nabla f(x_{t-1})}^2\\
 &\quad=\E_{\xi_t}\norm{g(x_t;\xi_t)-g(x_{t-1};\xi_t)}^2
                      -\norm{\nabla f(x_t)-\nabla f(x_{t-1})}^2\\
 &\quad\le\E_{\xi_t}\norm{g(x_t;\xi_t)-g(x_{t-1};\xi_t)}^2\\
 &\quad\le\bar L^2\norm{x_t-x_{t-1}}^2
 \le\bar L^2\eta^2.
\end{align*}
Consequently, we can ensure
\begin{align*}
 \E\norm{e_t}^2
 ={}&(1-\beta)^2\E\norm{e_{t-1}}^2\\
 &+\E\left\|
       \beta[g(x_t;\xi_t)-\nabla f(x_t)]+(1-\beta)[g(x_t;\xi_t)-g(x_{t-1};\xi_t)
                    -\nabla f(x_t)+\nabla f(x_{t-1})]
       \right\|^2\\
 \le{}&(1-\beta)^2\E\norm{e_{t-1}}^2
       +2\beta^2\E\norm{g(x_t;\xi_t)-\nabla f(x_t)}^2\\
 &+2(1-\beta)^2
     \E\norm{g(x_t;\xi_t)-g(x_{t-1};\xi_t)
                    -\nabla f(x_t)+\nabla f(x_{t-1})}^2\\
 \le{}&(1-\beta)^2\E\norm{e_{t-1}}^2
       +2\beta^2\sigma^2+2(1-\beta)^2\bar L^2\eta^2\\
 \le{}&(1-\beta)^2\E\norm{e_{t-1}}^2
       +2\beta^2\sigma^2+2\bar L^2\eta^2.
\end{align*}
The initialization gives
$\E\norm{e_1}^2=\E\norm{g(x_1;\xi_1)-\nabla f(x_1)}^2\le\sigma^2$.
Repeated substitution in the last recursion yields
\begin{align*}
 \E\norm{e_t}^2
 &\le(1-\beta)^{2(t-1)}\E\norm{e_1}^2
       +(2\beta^2\sigma^2+2\bar L^2\eta^2)
                  \sum_{j=0}^{t-2}(1-\beta)^{2j}\\
 &\le\sigma^2(1-\beta)^{2(t-1)}
       +\frac{2\beta^2\sigma^2+2\bar L^2\eta^2}
                    {1-(1-\beta)^2}\\
 &\le\sigma^2(1-\beta)^{2(t-1)}
       +\frac{2\beta^2\sigma^2+2\bar L^2\eta^2}{\beta}\\
 &=\sigma^2(1-\beta)^{2(t-1)}
       +2(\sigma^2+\bar L^2)n^{-2/3},
\end{align*}
where $1-(1-\beta)^2=\beta(2-\beta)\ge\beta$ and
$\beta=\eta=n^{-2/3}$. Taking square roots,
\begin{align*}
 \E\norm{e_t}
 &\le\sqrt{\E\norm{e_t}^2}\\
 &\le\sqrt{\sigma^2(1-\beta)^{2(t-1)}
                       +2(\sigma^2+\bar L^2)n^{-2/3}}\\
 &\le\sigma(1-\beta)^{t-1}
       +\sqrt{2(\sigma^2+\bar L^2)}\,n^{-1/3},
\end{align*}
which proves \eqref{eq:vr-tracking}.
\end{proof}

The second phase of Algorithm~\ref{alg:serial} starts at $x_{n+1}$.
By \eqref{eq:serial-G-endpoint},
$\E[f(x_{n+1})-f_*]\le C_G$. Applying Lemma~\ref{lem:vr} gives
\begin{align*}
 &\sum_{t=n+1}^{2n}\eta_V\E\norm{v_t-\nabla f(x_t)}\\
 &\quad\le\eta_V\sum_{j=1}^n
       \left[\sigma(1-\beta_V)^{j-1}
         +\sqrt{2(\sigma^2+\bar L^2)}\,n^{-1/3}\right]\\
 &\quad\le\frac{\eta_V\sigma}{\beta_V}
       +n\eta_V\sqrt{2(\sigma^2+\bar L^2)}\,n^{-1/3}\\
 &\quad=\frac{n^{-2/3}\sigma}{n^{-2/3}}
       +n n^{-2/3}\sqrt{2(\sigma^2+\bar L^2)}\,n^{-1/3}\\
 &\quad=\sigma+\sqrt{2(\sigma^2+\bar L^2)}.
\end{align*}
Summing Lemma~\ref{lem:normalized-descent} over this phase, we obtain
\begin{align*}
 \eta_V\sum_{t=n+1}^{2n}\E\norm{\nabla f(x_t)}
 &\le\E[f(x_{n+1})-f(x_{2n+1})] +2\sum_{t=n+1}^{2n}\eta_V
                      \E\norm{v_t-\nabla f(x_t)}
              +\frac L2n\eta_V^2\\
 &\le C_G+2\sigma+2\sqrt{2(\sigma^2+\bar L^2)}
                  +\frac L2n^{-1/3}.
\end{align*}
The uniform reservoir assigns probability $1/n$ to each point in this
phase. Since $n\eta_V=n^{1/3}$,
\begin{align}
 \E\norm{\nabla f(X_V)}
 &=\frac1n\sum_{t=n+1}^{2n}\E\norm{\nabla f(x_t)}\nonumber\\
 &\le\frac{C_G+2\sigma+2\sqrt{2(\sigma^2+\bar L^2)}
                         +(L/2)n^{-1/3}}{n^{1/3}}\nonumber\\
 &=\left[C_G+2\sigma+2\sqrt{2(\sigma^2+\bar L^2)}\right]n^{-1/3}
                         +\frac L2n^{-2/3}\nonumber\\
 &\le\left[C_G+\frac L2+2\sigma+
        2\sqrt{2(\sigma^2+\bar L^2)}\right]n^{-1/3}.
 \label{eq:serial-V-candidate}
\end{align}

\subsection{Validation phase}

Apply Lemma~\ref{lem:selection} with $m=2$ and $b$ samples at each of
$X_G,X_V$. Then
\[
 \E\norm{\nabla f(\widehat x_T)}
 \le\min\{\E\norm{\nabla f(X_G)},\E\norm{\nabla f(X_V)}\}
               +\frac{2\sigma}{\sqrt b}.
\]
Combining this bound with \eqref{eq:serial-G-candidate} gives
\[
 \E\norm{\nabla f(\widehat x_T)}
 \le C_G n^{-1/4}+\frac{2\sigma}{\sqrt b},
\]
which proves \eqref{eq:serial-standard}. Under average smoothness,
\eqref{eq:serial-V-candidate} instead gives
\[
 \E\norm{\nabla f(\widehat x_T)}
 \le\left[C_G+\frac L2+2\sigma+
           2\sqrt{2(\sigma^2+\bar L^2)}\right]n^{-1/3}
                   +\frac{2\sigma}{\sqrt b},
\]
which proves \eqref{eq:serial-mss-rate}. Finally, when $\sigma=0$,
exact validation and \eqref{eq:serial-G-noiseless} imply
\eqref{eq:serial-deterministic}:
\[
 \E\norm{\nabla f(\widehat x_T)}
 \le\E\norm{\nabla f(X_G)}
 \le(\Delta_f+L/2)n^{-1/2}.
\]
Next, we count the oracle calls and validation calls below
\begin{center}
\begin{tabular}{lrrr}
\hline
Phase & Optimization calls & Validation calls & Maximum per round\\
\hline
First phase & $\le2n+1$ & $0$ & $3$\\
Variance reduction phase & $2n-1$ & $b$ & $3$\\
Final validation & $0$ & $b$ & $1$\\
\hline
Total & $\le4n$ & $2b$ & $3$\\
\hline
\end{tabular}
\end{center}
Thus the total oracle cost satisfies
\[
 \text{rounds}=2n+b,\qquad
 \text{oracle calls}\le(2n+1)+(2n-1)+2b=4n+2b\le T.
\]

\section{Proof of Theorem~\ref{thm:anytime}}\label{sec:proof-anytime}

\subsection{Variance reduction bound}

\begin{lemma}
\label{lem:anytime-vr}
Under mean-square smoothness, the update
\eqref{eq:anytime-vr-parameters}--\eqref{eq:anytime-vr-point} satisfies
\begin{equation}
\label{eq:anytime-tracking}
 \E\norm{v_t-\nabla f(x^V_t)}^2\leq Kt^{-2/3}.
\end{equation}
Its weighted reservoir candidate satisfies
\begin{equation}
\label{eq:anytime-vr-candidate}
 \E\norm{\nabla f(V_n)}
 \leq\frac{\Delta_f+2L+2\sqrt K(1+\log n)}{W_n},
 \qquad W_n\geq n^{1/3}.
\end{equation}
\end{lemma}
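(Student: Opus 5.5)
We prove the tracking bound \eqref{eq:anytime-tracking} by induction, reusing the error decomposition from the proof of Lemma~\ref{lem:vr} but now with the time-varying parameters $\beta_t=\eta_t=t^{-2/3}$.

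\textbf{Step 1: the error recursion.} Write $e_t=v_t-\nabla f(x^V_t)$. Exactly as in Lemma~\ref{lem:vr}, the cross term vanishes after averaging over $\xi_t$. Using $\norm{x^V_t-x^V_{t-1}}\le\eta_{t-1}$, this gives
\[
 \E\norm{e_t}^2\le(1-\beta_t)^2\E\norm{e_{t-1}}^2+2\beta_t^2\sigma^2+2\bar L^2\eta_{t-1}^2 .
\]
Since $t\ge2$, we have $\eta_{t-1}^2=(t-1)^{-4/3}\le 2^{4/3}t^{-4/3}\le 3t^{-4/3}$. The recursion therefore simplifies to
\[
 \E\norm{e_t}^2\le(1-\beta_t)^2\E\norm{e_{t-1}}^2+(2\sigma^2+6\bar L^2)t^{-4/3}
 =(1-\beta_t)^2\E\norm{e_{t-1}}^2+Kt^{-4/3}.
\]

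\textbf{Step 2: the induction.} The base case is $\E\norm{e_1}^2\le\sigma^2\le K$. For the inductive step, assume $\E\norm{e_{t-1}}^2\le K(t-1)^{-2/3}$. It then suffices to show
\[
 (1-t^{-2/3})^2(t-1)^{-2/3}+t^{-4/3}\le t^{-2/3}.
\]
Use $(1-t^{-2/3})^2\le 1-t^{-2/3}$ and multiply through by $t^{2/3}$. The target becomes
\[
 (1-t^{-2/3})\left(\tfrac{t}{t-1}\right)^{2/3}\le 1-t^{-2/3}.
\]
This inequality is false as written, since the ratio $\bigl(\tfrac{t}{t-1}\bigr)^{2/3}$ exceeds one. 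So I will keep the full square $(1-\beta_t)^2\approx 1-2t^{-2/3}$ instead of discarding it. The ratio satisfies $\bigl(\tfrac{t}{t-1}\bigr)^{2/3}\le 1+\tfrac{2}{3(t-1)}$. The needed inequality reduces to an elementary check that the $2t^{-2/3}$ contraction dominates the $O(1/t)$ growth of the ratio. Small $t$ (for instance $t=2,3$) must be checked by hand.

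This elementary inequality is the main obstacle. If it fails for a few small $t$, I will absorb the slack by adjusting the constant in front of $\bar L^2$, since $6$ versus $2^{4/3}\cdot 2$ leaves some room.

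\textbf{Step 3: the candidate bound.} Sum Lemma~\ref{lem:normalized-descent} over $t=1,\dots,n$ along the variance reduction path:
\[
 \sum_{t=1}^n\eta_t\E\norm{\nabla f(x^V_t)}\le\Delta_f+2\sum_{t=1}^n\eta_t\E\norm{e_t}+\frac L2\sum_{t=1}^n\eta_t^2 .
\]
The function-gap term contributes $\Delta_f$, because $f(x^V_{n+1})\ge f_*$. For the quadratic term, $\sum_t t^{-4/3}\le 1+3=4$, so it is at most $2L$. For the tracking term, Jensen and Step 2 give $\E\norm{e_t}\le\sqrt K\,t^{-1/3}$. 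Hence $\eta_t\E\norm{e_t}\le\sqrt K\,t^{-1}$, and the term is at most $2\sqrt K(1+\log n)$.

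By the reservoir identity \eqref{eq:anytime-reservoir-laws}, established as in the first phase of Theorem~\ref{thm:serial}, we have
\[
 \E\norm{\nabla f(V_n)}=W_n^{-1}\sum_{t=1}^n\eta_t\E\norm{\nabla f(x^V_t)} .
\]
This is valid because the weights are deterministic here. Combining it with the descent sum yields \eqref{eq:anytime-vr-candidate}. Finally, $W_n\ge n\cdot n^{-2/3}=n^{1/3}$, since every $\eta_t\ge n^{-2/3}$.
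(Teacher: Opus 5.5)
Your Steps~1 and~3 match the paper's proof. You use the same recursion with $2\bar L^2\eta_{t-1}^2\le 6\bar L^2t^{-4/3}$, the same summed descent inequality with $\sum_t t^{-4/3}\le4$ and $\sum_t t^{-1}\le1+\log n$, and the same deterministic-weight reservoir identity with $W_n\ge n^{1/3}$.

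The gap is in Step~2, which carries the whole content of the tracking bound \eqref{eq:anytime-tracking}. You never establish the inductive inequality
\[
 (1-t^{-2/3})^2\Bigl(\frac{t}{t-1}\Bigr)^{2/3}+t^{-2/3}\le1,\qquad t\ge2.
\]
You leave it as an ``elementary check'' and add a fallback, and the fallback does not work. The constant $K=6\bar L^2+2\sigma^2$ is fixed by the statement. The only slack you point to is $\bigl(6-2(\tfrac{t}{t-1})^{4/3}\bigr)\bar L^2t^{-4/3}$, which is proportional to $\bar L^2$. Any failure of the displayed inequality would instead leave a deficit proportional to $K$, which contains $2\sigma^2$. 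When $\sigma^2$ dominates $\bar L^2$ (e.g.\ $\bar L=0$) there is no room at all. So if the inequality failed at some $t$, the lemma with this $K$ would not follow. As written, the tracking bound is not proved.

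The missing step is short and needs neither Bernoulli nor hand checks of small $t$. Set $u=t^{-2/3}\in(0,1)$ and $r=(t/(t-1))^{2/3}$. The target $(1-u)^2r+u\le1$ is equivalent, after dividing by $1-u>0$, to $(1-u)r\le1$, that is,
\[
 1-t^{-2/3}\le\Bigl(1-\frac1t\Bigr)^{2/3}.
\]
This holds because $t^{-2/3}\ge1/t$ and $x\le x^{2/3}$ on $[0,1]$.

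Your first attempt failed because replacing $(1-u)^2$ by $1-u$ discards exactly the factor that absorbs the ratio $r$. One factor of $1-\beta_t$ must be spent on $\beta_{t-1}/\beta_t$. The other produces the $-K\beta_t^2$ that cancels the fresh noise $K\beta_t^2$.

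This is how the paper closes the induction. It writes $1-\beta_t\le1-\frac1t\le(1-\frac1t)^{2/3}=\beta_t/\beta_{t-1}$, hence $(1-\beta_t)^2\beta_{t-1}\le(1-\beta_t)\beta_t=\beta_t-\beta_t^2$. With this inserted, the rest of your proposal goes through unchanged.
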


\begin{proof}
We omit the superscript $V$ and write $e_t=v_t-\nabla f(x_t)$.
For $t\geq2$, the estimator gives
\begin{align}
\label{eq:anytime-tracking-expansion}
 e_t
 ={}&(1-\beta_t)e_{t-1}\nonumber\\
 &+(1-\beta_t)\bigl[g(x_t;\xi_t)-g(x_{t-1};\xi_t)
                    -\nabla f(x_t)+\nabla f(x_{t-1})\bigr]\nonumber\\
 &+\beta_t\bigl[g(x_t;\xi_t)-\nabla f(x_t)\bigr].
\end{align}
By unbiasedness, we have
\[
 \E_{\xi_t}[g(x_t;\xi_t)-g(x_{t-1};\xi_t)]
 =\nabla f(x_t)-\nabla f(x_{t-1}).
\]
Expanding the square gives
\begin{align*}
 &\E_{\xi_t}\norm{g(x_t;\xi_t)-g(x_{t-1};\xi_t)
                       -\nabla f(x_t)+\nabla f(x_{t-1})}^2\\
 &\quad=\E_{\xi_t}\norm{g(x_t;\xi_t)-g(x_{t-1};\xi_t)}^2\\
 &\qquad\quad
 -2\inner{\E_{\xi_t}[g(x_t;\xi_t)-g(x_{t-1};\xi_t)]}
               {\nabla f(x_t)-\nabla f(x_{t-1})}\\
 &\qquad\quad +\norm{\nabla f(x_t)-\nabla f(x_{t-1})}^2\\
 &\quad=\E_{\xi_t}\norm{g(x_t;\xi_t)-g(x_{t-1};\xi_t)}^2
             -\norm{\nabla f(x_t)-\nabla f(x_{t-1})}^2\\
 &\quad\leq\bar L^2\norm{x_t-x_{t-1}}^2
 \leq\bar L^2\eta_{t-1}^2.
\end{align*}
Here we used mean-square smoothness and
$\norm{x_t-x_{t-1}}\leq\eta_{t-1}$.

Both bracketed terms in \eqref{eq:anytime-tracking-expansion} have
zero mean. Their cross terms with $e_{t-1}$ therefore vanish.
Applying $\norm{u+v}^2\leq2\norm u^2+2\norm v^2$ to their sum gives
\begin{align*}
 \E_{\xi_t}\norm{e_t}^2
 &\leq(1-\beta_t)^2\norm{e_{t-1}}^2
       +2(1-\beta_t)^2\bar L^2\eta_{t-1}^2
       +2\beta_t^2\sigma^2\\
 &\leq(1-\beta_t)^2\norm{e_{t-1}}^2
       +2\bar L^2\eta_{t-1}^2+2\beta_t^2\sigma^2.
\end{align*}
Averaging over the earlier samples proves
\begin{equation}
\label{eq:anytime-tracking-recursion}
 \E\norm{v_t-\nabla f(x_t)}^2
 \leq(1-\beta_t)^2\E\norm{v_{t-1}-\nabla f(x_{t-1})}^2
      +2\bar L^2\eta_{t-1}^2+2\beta_t^2\sigma^2.
\end{equation}

We prove \eqref{eq:anytime-tracking} by induction with
$K=6\bar L^2+2\sigma^2$. At $t=1$,
\[
 \E\norm{e_1}^2
 =\E\norm{g(x_1;\xi_1)-\nabla f(x_1)}^2
 \leq\sigma^2\leq K=K\beta_1.
\]
For $t\geq2$, $\beta_t=t^{-2/3}\geq1/t$ and
$u\leq u^{2/3}$ for $0\leq u\leq1$, so
\begin{equation}
\label{eq:anytime-beta-comparison}
 1-\beta_t\leq1-\frac1t
 \leq\left(1-\frac1t\right)^{2/3}
 =\frac{\beta_t}{\beta_{t-1}}.
\end{equation}
Multiplying this inequality by
$(1-\beta_t)\beta_{t-1}\geq0$ yields
\[
 (1-\beta_t)^2\beta_{t-1}
 \leq(1-\beta_t)\beta_t.
\]
Also,
\[
 \frac{\eta_{t-1}^2}{\beta_t^2}
 =\frac{(t-1)^{-4/3}}{t^{-4/3}}
 =\left(\frac{t}{t-1}\right)^{4/3}
 \leq2^{4/3}<3.
\]
If $\E\norm{e_{t-1}}^2\leq K\beta_{t-1}$, the recursion gives
\begin{align*}
 \E\norm{e_t}^2
 &\leq K(1-\beta_t)^2\beta_{t-1}
       +6\bar L^2\beta_t^2+2\sigma^2\beta_t^2\\
 &\leq K(1-\beta_t)\beta_t
       +(6\bar L^2+2\sigma^2)\beta_t^2\\
 &=K\beta_t-K\beta_t^2+K\beta_t^2
 =K\beta_t.
\end{align*}
Thus, by Cauchy--Schwarz,
\[
 \E\norm{e_t}\leq\sqrt{\E\norm{e_t}^2}
 \leq\sqrt K\,t^{-1/3}.
\]

By Lemma~\ref{lem:normalized-descent},
\[
 \eta_t\norm{\nabla f(x_t)}
 \leq f(x_t)-f(x_{t+1})+2\eta_t\norm{e_t}
                      +\frac L2\eta_t^2.
\]
Taking expectations, summing, and using $f(x_{n+1})\geq f_*$ gives
\begin{align*}
 \sum_{t=1}^n\eta_t\E\norm{\nabla f(x_t)}
 &\leq f(x_1)-\E f(x_{n+1})
       +2\sum_{t=1}^n\eta_t\E\norm{e_t}
       +\frac L2\sum_{t=1}^n\eta_t^2\\
 &\leq\Delta_f+2\sqrt K\sum_{t=1}^n t^{-2/3}t^{-1/3}
                 +\frac L2\sum_{t=1}^n t^{-4/3}\\
 &=\Delta_f+2\sqrt K\sum_{t=1}^n\frac1t
                 +\frac L2\sum_{t=1}^n t^{-4/3}.
\end{align*}
Since both summands decrease with $t$,
\begin{align*}
 \sum_{t=1}^n\frac1t
 &\leq1+\int_1^n\frac{ds}{s}=1+\log n,\\
 \sum_{t=1}^n t^{-4/3}
 &\leq1+\int_1^n s^{-4/3}\,ds
 =1+3(1-n^{-1/3})\leq4.
\end{align*}
Consequently,
\[
 \sum_{t=1}^n\eta_t\E\norm{\nabla f(x_t)}
 \leq\Delta_f+2\sqrt K(1+\log n)+2L.
\]

The weights $\eta_t$ are deterministic, and the index-selection law in
\eqref{eq:anytime-reservoir-laws} gives
\[
 \E\norm{\nabla f(V_n)}
 =\E\left[\sum_{t=1}^n\frac{\eta_t}{W_n}
                           \norm{\nabla f(x_t)}\right]
 =\frac1{W_n}\sum_{t=1}^n\eta_t\E\norm{\nabla f(x_t)}.
\]
This proves the first bound in \eqref{eq:anytime-vr-candidate}.
For the second bound,
\[
 W_n=\sum_{t=1}^n t^{-2/3}
 \geq\sum_{t=1}^n n^{-2/3}=n^{1/3}.
\]
\end{proof}

\subsection{AdaGrad-Norm bound}

The proof below follows the potential approach of
\citet{wang2023}. We give the details for bounded variance and a uniformly
sampled output.

\begin{lemma}
\label{lem:anytime-adagrad}
Under standard smoothness and bounded variance alone, the update
\eqref{eq:anytime-adagrad-update} satisfies
\begin{align}
\label{eq:anytime-adagrad-two-bounds}
 \E b_n&\leq M_n,\nonumber\\
 \E\sum_{t=1}^n\frac{\norm{\nabla f(x^A_t)}^2}{b_{t-1}}
 &\leq C_A+4L\log M_n.
\end{align}
Consequently, its uniform reservoir candidate satisfies
\begin{equation}
\label{eq:anytime-adagrad-candidate}
 \E\norm{\nabla f(A_n)}
 \leq\sqrt{\frac{M_n(C_A+4L\log M_n)}{n}}.
\end{equation}
\end{lemma}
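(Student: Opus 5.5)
Throughout, write $x_t=x^A_t$, $\nabla_t=\nabla f(x_t)$, and $\varepsilon_t=g_t-\nabla_t$. The plan follows the decorrelated-stepsize potential argument of \citet{wang2023}: prove the weighted bound with $b_{t-1}$ in the denominator, then get $\E b_n\le M_n$ by a separate argument, and finish with Cauchy--Schwarz.

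\textbf{Descent step.} Start from smoothness:
\[
f(x_{t+1})\le f(x_t)-\frac{\inner{\nabla_t}{g_t}}{b_t}+\frac{L\norm{g_t}^2}{2b_t^2}.
\]
Split $1/b_t=1/b_{t-1}-(1/b_{t-1}-1/b_t)$. Since $b_{t-1}$ is measurable before $\zeta_t$, the leading term has conditional mean $-\norm{\nabla_t}^2/b_{t-1}$. The correction is bounded by
\[
\Bigl(\tfrac1{b_{t-1}}-\tfrac1{b_t}\Bigr)\norm{\nabla_t}\norm{g_t},
\]
which, with $b_t^2-b_{t-1}^2=\norm{g_t}^2$, equals $\frac{\norm{g_t}^2\norm{\nabla_t}\norm{g_t}}{b_{t-1}b_t(b_t+b_{t-1})}$. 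Apply Young's inequality:
\[
\le \frac{\norm{\nabla_t}^2}{4b_{t-1}}\cdot\frac{\norm{g_t}^2}{\E_{t}\ldots}+\ldots
\]
The standard move is to bound it by
\[
\frac{\norm{\nabla_t}^2}{2b_{t-1}}+\frac{\norm{g_t}^2\,\norm{g_t}^2}{2b_{t-1}b_t^2}
\]
and then to control $\norm{g_t}^2\le 2\norm{\nabla_t}^2+2\norm{\varepsilon_t}^2$. The $\norm{\nabla_t}^2$ piece is absorbed via a potential: $\norm{\nabla_t}^2/b_{t-1}$ compared with $\norm{\nabla_{t+1}}^2/b_t$, using $\norm{\nabla_{t+1}}\le\norm{\nabla_t}+L/\ldots$ since $\norm{x_{t+1}-x_t}\le1$. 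This is where the constants $2\norm{\nabla f(x_1)}^2$ and $8L^2$ in $C_A$ should come from. The $\sigma^2$ piece is summed through $\E\norm{\varepsilon_t}^2\le\sigma^2$, using $b_{t-1}\ge b_0=1$, which gives the $2\sigma^2$.

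\textbf{Logarithmic terms.} The second-order term is summed with $\sum_t \norm{g_t}^2/b_t^2\le \log(b_n^2)=2\log b_n$, and then Jensen gives $\E\log b_n\le\log \E b_n\le\log M_n$. This produces the $4L\log M_n$ term. Telescoping the $f$ values with $f\ge f_*$ yields the second bound in \eqref{eq:anytime-adagrad-two-bounds}.

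\textbf{Bounding $\E b_n$.} This is the main obstacle, because it needs a high-probability-free control of $b_n$ without bounded gradients. Write
\[
b_n\le 1+\sqrt{\textstyle\sum_t\norm{g_t}^2}\le 1+\sqrt{2\sum_t\norm{\varepsilon_t}^2}+\sqrt{2\sum_t\norm{\nabla_t}^2}.
\]
The noise part has expectation at most $\sqrt{2n}\,\sigma$, which is inside $4\sigma\sqrt n$. For the gradient part, use
\[
\sum_t\norm{\nabla_t}^2\le b_n\sum_t\norm{\nabla_t}^2/b_{t-1}.
\]
Then $\sqrt{2b_nS}\le b_n/2+S$ with $S=\sum_t\norm{\nabla_t}^2/b_{t-1}$. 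Rearranging gives $b_n\le 2+2\sqrt{2\sum\norm{\varepsilon_t}^2}+2S$. For $S$ I would use the analogue of the pathwise descent bound with $\log b_n$ replaced by a quantity linear in $b_n$; the inequality $4L\log b_n\le b_n/4+16L^2$-type absorption explains the $16L^2$ term. Taking expectations gives $\E b_n\le 4+4\sigma\sqrt n+2C_A+16L^2=M_n$. I expect the delicate point to be that the bound on $S$ holds only in expectation; I would therefore run the argument on the expected inequality and use concavity of $\log$, so that $\E b_n$ appears self-consistently.

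\textbf{Candidate bound.} Using the uniform reservoir law \eqref{eq:anytime-reservoir-laws},
\[
\E\norm{\nabla f(A_n)}=\frac1n\E\sum_t\norm{\nabla_t}.
\]
By Cauchy--Schwarz,
\[
\sum_t\norm{\nabla_t}\le\sqrt{b_n\,S}\cdot\sqrt n,
\]
since $b_{t-1}\le b_n$. Another Cauchy--Schwarz in expectation gives
\[
\E\sqrt{b_nS}\le\sqrt{\E b_n\,\E S}\le\sqrt{M_n(C_A+4L\log M_n)}.
\]
Dividing by $n$ yields \eqref{eq:anytime-adagrad-candidate}.
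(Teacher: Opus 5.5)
\textbf{Main gap: the noise term in the descent step.} You apply Young's inequality pathwise and then split $\norm{g_t}^2\le2\norm{\nabla_t}^2+2\norm{\varepsilon_t}^2$. Since $b_t+b_{t-1}\le2b_t$, this leaves the noise piece
\[
\frac{\norm{\varepsilon_t}^2\norm{g_t}^2}{b_{t-1}b_t^2}\le2\norm{\varepsilon_t}^2\Bigl(\frac1{b_{t-1}}-\frac1{b_t}\Bigr).
\]
Its conditional expectation does not factor into $\sigma^2\,\E_t\bigl[\frac1{b_{t-1}}-\frac1{b_t}\bigr]$, because $\varepsilon_t$ and $b_t$ come from the same sample; here $\E_t$ is the expectation over $\zeta_t$ given the past. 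Under a second-moment bound alone, a rare spike with $\norm{\varepsilon_t}\gg b_{t-1}$ makes this term about $\norm{\varepsilon_t}^2/b_{t-1}$, so its expectation can be of order $\sigma^2/b_{t-1}$ at every step. Your justification via $\E\norm{\varepsilon_t}^2\le\sigma^2$ and $b_{t-1}\ge1$ then sums to $n\sigma^2$, not $2\sigma^2$, and no $n$-independent $C_A$ results.

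The missing idea is to take the conditional expectation before splitting. Keep the $\mathcal F_{t-1}$-measurable factor $\norm{\nabla_t}/b_{t-1}$ outside, use $\norm{g_t}^3/(b_t(b_t+b_{t-1}))\le\norm{g_t}^2/(b_t+b_{t-1})$, and apply Cauchy--Schwarz inside $\E_t$:
\[
\E_t\frac{\norm{g_t}^2}{b_t+b_{t-1}}\le\Bigl(\E_t\norm{g_t}^2\Bigr)^{1/2}\Bigl(\E_t\frac{\norm{g_t}^2}{(b_t+b_{t-1})^2}\Bigr)^{1/2},\qquad\E_t\norm{g_t}^2\le\norm{\nabla_t}^2+\sigma^2.
\]
Only then apply Young's inequality. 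This yields
\[
\frac{\norm{\nabla_t}^2}{2b_{t-1}}+\frac{\norm{\nabla_t}^2+\sigma^2}{2}\,\E_t\Bigl[\frac1{b_{t-1}}-\frac1{b_t}\Bigr],
\]
where the coefficient of the telescoping increment is deterministic given the past. The $\sigma^2$ part therefore sums to at most $\sigma^2/2$, and the $\norm{\nabla_t}^2$ part feeds the potential. In that potential step you also need $\norm{x_t-x_{t-1}}=\norm{g_{t-1}}/b_{t-1}$ together with $\sum_t\norm{g_t}^2/b_t^3\le2$. The crude bound $\norm{x_{t+1}-x_t}\le1$ that you cite leaves a per-step error $L^2/b_{t-1}$, which again is not summable.

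\textbf{Secondary issue: the constant $M_n$.} Even granting $\E S\le C_A+4L\log\E b_n$, your bound on $\E b_n$ does not reach the stated $M_n$. The split $\norm{g_t}^2\le2\norm{\nabla_t}^2+2\norm{\varepsilon_t}^2$ inside $b_n$ costs factors of $\sqrt2$. Your chain gives $\E b_n\le2+2\sqrt2\,\sigma\sqrt n+2\E S$. After absorbing $8L\log\E b_n\le\frac12\E b_n+32L^2$ you get $4+4\sqrt2\,\sigma\sqrt n+4C_A+64L^2$, so the noise contribution is not ``inside $4\sigma\sqrt n$''. Use Minkowski on the stacked vector instead:
\[
b_n=\norm{(1,g_1,\ldots,g_n)}\le1+\Bigl(\sum_t\norm{\nabla_t}^2\Bigr)^{1/2}+\Bigl(\sum_t\norm{\varepsilon_t}^2\Bigr)^{1/2}.
\]
Then $\E\sqrt{b_nS}\le\frac12\E b_n+\frac12\E S$ gives $\E b_n\le2+2\sigma\sqrt n+\E S$, and with $\log u\le\sqrt u$ and Young this gives exactly $M_n$. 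Your treatment of the expectation-only bound on $S$ via Jensen, and your final Cauchy--Schwarz step for $A_n$, coincide with the paper's proof.
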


\begin{proof}
We omit the superscript $A$ in this proof. Recall that
\[
 b_0=1,\qquad
 b_t^2=b_{t-1}^2+\norm{g_t}^2,\qquad
 x_{t+1}=x_t-\frac{g_t}{b_t}.
\]
In particular,
\[
 b_t\geq b_{t-1}\geq1,\qquad
 b_t\geq\norm{g_t},\qquad
 \norm{x_{t+1}-x_t}=\frac{\norm{g_t}}{b_t}\leq1.
\]
Thus $\norm{x_t-x_1}\leq t-1$ and
\[
 \norm{\nabla f(x_t)}
 \leq\norm{\nabla f(x_1)}+L(t-1),\qquad
 \E\norm{g_t}^2
 \leq\bigl(\norm{\nabla f(x_1)}+L(t-1)\bigr)^2+\sigma^2.
\]
For the proof only, set
$x_0=x_1$ and $g_0=0$ to make
$x_1=x_0-g_0/b_0$ valid.

We first bound the effect of using the current gradient in $b_t$.
Unbiasedness gives
\begin{align*}
 \E\norm{g_t}^2
 &=\norm{\nabla f(x_t)}^2
   +2\inner{\nabla f(x_t)}{\E g_t-\nabla f(x_t)}
   +\E\norm{g_t-\nabla f(x_t)}^2\\
 &\leq\norm{\nabla f(x_t)}^2+\sigma^2.
\end{align*}
The difference between the two normalizations is
\begin{align*}
 \frac1{b_{t-1}}-\frac1{b_t}
 =\frac{b_t-b_{t-1}}{b_{t-1}b_t}=\frac{b_t^2-b_{t-1}^2}
         {b_{t-1}b_t(b_t+b_{t-1})}=\frac{\norm{g_t}^2}
         {b_{t-1}b_t(b_t+b_{t-1})}\geq0.
\end{align*}
Using this identity, the inner-product inequality, and
$\norm{g_t}\leq b_t$, we obtain
\begin{align}
\label{eq:anytime-adagrad-correlation-start}
 &\E\left[
   \inner{\nabla f(x_t)}{g_t}
   \left(\frac1{b_{t-1}}-\frac1{b_t}\right)\right]\nonumber\\
 &\quad\leq
 \E\left[
   \left|\inner{\nabla f(x_t)}{g_t}\right|
   \left(\frac1{b_{t-1}}-\frac1{b_t}\right)\right]\nonumber\\
 &\quad\leq
 \norm{\nabla f(x_t)}
 \E\left[\norm{g_t}
   \left(\frac1{b_{t-1}}-\frac1{b_t}\right)\right]\nonumber\\
 &\quad=
 \frac{\norm{\nabla f(x_t)}}{b_{t-1}}
 \E\left[\frac{\norm{g_t}^3}{b_t(b_t+b_{t-1})}\right]\nonumber\\
 &\quad\leq
 \frac{\norm{\nabla f(x_t)}}{b_{t-1}}
 \E\left[\frac{\norm{g_t}^2}{b_t+b_{t-1}}\right].
\end{align}
Cauchy--Schwarz bounds the last expectation by
\begin{align*}
 \left(\E\frac{\norm{g_t}^2}{b_t+b_{t-1}}\right)^2
 &=\left(\E\left[
      \norm{g_t}\frac{\norm{g_t}}{b_t+b_{t-1}}\right]\right)^2\\
 &\leq\E\norm{g_t}^2\,
          \E\frac{\norm{g_t}^2}{(b_t+b_{t-1})^2}\\
 &\leq\bigl(\norm{\nabla f(x_t)}^2+\sigma^2\bigr)
          \E\frac{\norm{g_t}^2}{(b_t+b_{t-1})^2}.
\end{align*}
Apply $uv\leq(u^2+v^2)/2$ to the resulting product:
\begin{align*}
 \frac{\norm{\nabla f(x_t)}}{b_{t-1}}
       \E\frac{\norm{g_t}^2}{b_t+b_{t-1}}
 &\leq
 \frac{\norm{\nabla f(x_t)}}{\sqrt{b_{t-1}}}
 \left(
   \frac{\norm{\nabla f(x_t)}^2+\sigma^2}{b_{t-1}}\,
   \E\frac{\norm{g_t}^2}{(b_t+b_{t-1})^2}
 \right)^{1/2}\\
 &\leq
 \frac{\norm{\nabla f(x_t)}^2}{2b_{t-1}}
 +\frac{\norm{\nabla f(x_t)}^2+\sigma^2}{2}
   \E\frac{\norm{g_t}^2}{b_{t-1}(b_t+b_{t-1})^2}.
\end{align*}
Since $b_t+b_{t-1}\geq b_t$,
\[
 \frac{\norm{g_t}^2}{b_{t-1}(b_t+b_{t-1})^2}
 \leq
 \frac{\norm{g_t}^2}{b_{t-1}b_t(b_t+b_{t-1})}
 =\frac1{b_{t-1}}-\frac1{b_t}.
\]
Substituting these bounds into
\eqref{eq:anytime-adagrad-correlation-start} gives
\begin{equation}\label{eq:anytime-adagrad-correlation}
 \E\left[
   \inner{\nabla f(x_t)}{g_t}
   \left(\frac1{b_{t-1}}-\frac1{b_t}\right)\right]\leq
 \frac{\norm{\nabla f(x_t)}^2}{2b_{t-1}}
 +\frac{\norm{\nabla f(x_t)}^2+\sigma^2}{2}
       \E\left[\frac1{b_{t-1}}-\frac1{b_t}\right].
\end{equation}

Next, we rewrite the term containing
$\norm{\nabla f(x_t)}^2$ so that it can be summed over $t$:
\begin{align}
\label{eq:anytime-adagrad-potential}
 &\frac{\norm{\nabla f(x_t)}^2}{2}
       \left(\frac1{b_{t-1}}-\frac1{b_t}\right)\nonumber\\
 &\quad=
 \frac{\norm{\nabla f(x_t)}^2}{2b_{t-1}}
 -\frac{\norm{\nabla f(x_t)}^2}{2b_t}\nonumber\\
 &\quad=
 \frac12\left(
   \frac{\norm{\nabla f(x_{t-1})}^2}{b_{t-1}}
   -\frac{\norm{\nabla f(x_t)}^2}{b_t}\right) 
 +\frac{\norm{\nabla f(x_t)}^2-\norm{\nabla f(x_{t-1})}^2}
        {2b_{t-1}}.
\end{align}
For the last numerator,  smoothness gives
\begin{align*}
 &\norm{\nabla f(x_t)}^2-\norm{\nabla f(x_{t-1})}^2\\
 &\quad=
 2\inner{\nabla f(x_t)}
          {\nabla f(x_t)-\nabla f(x_{t-1})}
 -\norm{\nabla f(x_t)-\nabla f(x_{t-1})}^2\\
 &\quad\leq
 2\norm{\nabla f(x_t)}
   \norm{\nabla f(x_t)-\nabla f(x_{t-1})}\\
 &\quad\leq
 2L\norm{\nabla f(x_t)}\norm{x_t-x_{t-1}}\\
 &\quad=
 \frac{2L\norm{\nabla f(x_t)}\norm{g_{t-1}}}{b_{t-1}}.
\end{align*}
Consequently, $uv\leq u^2/4+v^2$ implies
\begin{align}
\label{eq:anytime-adagrad-potential-change}
 \frac{\norm{\nabla f(x_t)}^2-\norm{\nabla f(x_{t-1})}^2}
        {2b_{t-1}}
 &\leq
 \frac{\norm{\nabla f(x_t)}}{\sqrt{b_{t-1}}}
 \frac{L\norm{g_{t-1}}}{b_{t-1}^{3/2}}\nonumber\\
 &\leq
 \frac{\norm{\nabla f(x_t)}^2}{4b_{t-1}}
 +\frac{L^2\norm{g_{t-1}}^2}{b_{t-1}^3}.
\end{align}

The smoothness inequality for the AdaGrad update is
\[
 f(x_{t+1})
 \leq f(x_t)-\frac{\inner{\nabla f(x_t)}{g_t}}{b_t}
             +\frac L2\frac{\norm{g_t}^2}{b_t^2}.
\]
Separate $1/b_t$ from
$1/b_{t-1}$ and use $\E g_t=\nabla f(x_t)$:
\begin{align*}
 \E f(x_{t+1})
 &\leq f(x_t)
 -\frac{\inner{\nabla f(x_t)}{\E g_t}}{b_{t-1}}+
 \E\left[\inner{\nabla f(x_t)}{g_t}
           \left(\frac1{b_{t-1}}-\frac1{b_t}\right)\right]
 +\frac L2\E\frac{\norm{g_t}^2}{b_t^2}\\
 &=f(x_t)-\frac{\norm{\nabla f(x_t)}^2}{b_{t-1}}+
 \E\left[\inner{\nabla f(x_t)}{g_t}
           \left(\frac1{b_{t-1}}-\frac1{b_t}\right)\right]
 +\frac L2\E\frac{\norm{g_t}^2}{b_t^2}.
\end{align*}
Insert \eqref{eq:anytime-adagrad-correlation},
\eqref{eq:anytime-adagrad-potential}, and
\eqref{eq:anytime-adagrad-potential-change}. The coefficient of
$\norm{\nabla f(x_t)}^2/b_{t-1}$ is $-1+1/2+1/4=-1/4$.
Now also average over all earlier samples:
\begin{align}
\label{eq:anytime-adagrad-descent}
 \E f(x_{t+1})
 &\leq\E f(x_t)
 -\frac14\E\frac{\norm{\nabla f(x_t)}^2}{b_{t-1}}\nonumber\\
 &\quad+
 \frac12\E\left[
   \frac{\norm{\nabla f(x_{t-1})}^2}{b_{t-1}}
   -\frac{\norm{\nabla f(x_t)}^2}{b_t}\right]\nonumber\\
 &\quad+
 \frac{\sigma^2}{2}\E\left[\frac1{b_{t-1}}-\frac1{b_t}\right]
 +L^2\E\frac{\norm{g_{t-1}}^2}{b_{t-1}^3}
 +\frac L2\E\frac{\norm{g_t}^2}{b_t^2}.
\end{align}

We next bound the last three sums in
\eqref{eq:anytime-adagrad-descent}. Since $u^{-3/2}$ and $u^{-1}$
are decreasing for $u>0$,
\begin{align*}
 \frac{\norm{g_t}^2}{b_t^3}
 &=\frac{b_t^2-b_{t-1}^2}{(b_t^2)^{3/2}}
 \leq\int_{b_{t-1}^2}^{b_t^2}u^{-3/2}\,du
 =2\left(\frac1{b_{t-1}}-\frac1{b_t}\right),\\
 \frac{\norm{g_t}^2}{b_t^2}
 &=\frac{b_t^2-b_{t-1}^2}{b_t^2}\leq\int_{b_{t-1}^2}^{b_t^2}\frac{du}{u}
 =2\log b_t-2\log b_{t-1}.
\end{align*}
Using $g_0=0$ and $b_0=1$ gives the following bounds, including the
index shift in the first sum:
\begin{align}
\label{eq:anytime-adagrad-scalar-sums}
 \sum_{t=1}^n\frac{\norm{g_{t-1}}^2}{b_{t-1}^3}
 &=\sum_{t=1}^{n-1}\frac{\norm{g_t}^2}{b_t^3}
 \leq2\left(1-\frac1{b_{n-1}}\right)\leq2,\nonumber\\
 \sum_{t=1}^n\frac{\norm{g_t}^2}{b_t^2}
 &\leq2\sum_{t=1}^n(\log b_t-\log b_{t-1})
 =2\log b_n,\nonumber\\
 \sum_{t=1}^n\left(\frac1{b_{t-1}}-\frac1{b_t}\right)
 &=1-\frac1{b_n}\leq1.
\end{align}

The function and gradient terms in
\eqref{eq:anytime-adagrad-descent} telescope as follows:
\begin{align*}
 \sum_{t=1}^n\bigl(f(x_t)-f(x_{t+1})\bigr)
 &=f(x_1)-f(x_{n+1}),\\
 \sum_{t=1}^n\left(
   \frac{\norm{\nabla f(x_{t-1})}^2}{b_{t-1}}
   -\frac{\norm{\nabla f(x_t)}^2}{b_t}\right)
 &=\norm{\nabla f(x_1)}^2
   -\frac{\norm{\nabla f(x_n)}^2}{b_n}.
\end{align*}
Summing \eqref{eq:anytime-adagrad-descent} and moving the gradient sum
to the left therefore yields
\begin{align*}
 \frac14\E\sum_{t=1}^n
                 \frac{\norm{\nabla f(x_t)}^2}{b_{t-1}}
 &\leq
 f(x_1)-\E f(x_{n+1})
 +\frac12\norm{\nabla f(x_1)}^2
 -\frac12\E\frac{\norm{\nabla f(x_n)}^2}{b_n}\\
 &\qquad+
 \frac{\sigma^2}{2}\left(1-\E\frac1{b_n}\right)
 +L^2\E\sum_{t=1}^n\frac{\norm{g_{t-1}}^2}{b_{t-1}^3}
 +\frac L2\E\sum_{t=1}^n\frac{\norm{g_t}^2}{b_t^2}\\
 &\quad\leq
 \Delta_f+\frac12\norm{\nabla f(x_1)}^2
 +\frac{\sigma^2}{2}+2L^2+L\E\log b_n.
\end{align*}
The last inequality uses $f(x_{n+1})\geq f_*$,
\eqref{eq:anytime-adagrad-scalar-sums}, and the nonnegativity of
$\norm{\nabla f(x_n)}^2/b_n$ and $1/b_n$.
Multiplication by four and concavity of the logarithm give
\begin{align}
\label{eq:anytime-adagrad-weighted}
 \E\sum_{t=1}^n\frac{\norm{\nabla f(x_t)}^2}{b_{t-1}}
 &\leq
 4\Delta_f+2\norm{\nabla f(x_1)}^2+2\sigma^2\nonumber+8L^2
 +4L\E\log b_n\nonumber\\
 &=C_A+4L\E\log b_n\nonumber\\
 &\leq C_A+4L\log(\E b_n).
\end{align}

We now bound $\E b_n$. Since $b_{t-1}\leq b_n$ for $t\leq n$,
\[
 \sum_{t=1}^n\norm{\nabla f(x_t)}^2
 =\sum_{t=1}^n b_{t-1}
                   \frac{\norm{\nabla f(x_t)}^2}{b_{t-1}}
 \leq b_n\sum_{t=1}^n
                   \frac{\norm{\nabla f(x_t)}^2}{b_{t-1}}.
\]
Take square roots and apply Cauchy--Schwarz to the expectation:
\begin{align}
\label{eq:anytime-adagrad-square-root-sum}
 \E\sqrt{\sum_{t=1}^n\norm{\nabla f(x_t)}^2}
 \leq\E\left[
    \sqrt{b_n}
    \left(\sum_{t=1}^n
              \frac{\norm{\nabla f(x_t)}^2}{b_{t-1}}\right)^{1/2}
    \right]\leq
 \sqrt{\E b_n}\left(
    \E\sum_{t=1}^n
              \frac{\norm{\nabla f(x_t)}^2}{b_{t-1}}
              \right)^{1/2}.
\end{align}
To relate $b_n$ to the true gradients, apply the Euclidean triangle inequality:
\begin{align*}
 b_n
 &=\norm{(1,g_1,\ldots,g_n)}\\
 &\leq
 \norm{(1,\nabla f(x_1),\ldots,\nabla f(x_n))}
 +\norm{(0,g_1-\nabla f(x_1),\ldots,g_n-\nabla f(x_n))}\\
 &=\sqrt{1+\sum_{t=1}^n\norm{\nabla f(x_t)}^2}
   +\sqrt{\sum_{t=1}^n\norm{g_t-\nabla f(x_t)}^2}\\
 &\leq
 1+\sqrt{\sum_{t=1}^n\norm{\nabla f(x_t)}^2}
   +\sqrt{\sum_{t=1}^n\norm{g_t-\nabla f(x_t)}^2}.
\end{align*}
Concavity of the square root and the variance bound imply
\[
 \E\sqrt{\sum_{t=1}^n\norm{g_t-\nabla f(x_t)}^2}
 \leq
 \sqrt{\sum_{t=1}^n\E\norm{g_t-\nabla f(x_t)}^2}
 \leq\sigma\sqrt n.
\]
Combining these inequalities and then using
$uv\leq(u^2+v^2)/2$ gives
\begin{align*}
 \E b_n
 &\leq
 1+\sqrt{\E b_n}
   \left(\E\sum_{t=1}^n
                \frac{\norm{\nabla f(x_t)}^2}{b_{t-1}}
   \right)^{1/2}
 +\sigma\sqrt n\\
 &\leq
 1+\frac12\E b_n
 +\frac12\E\sum_{t=1}^n
                \frac{\norm{\nabla f(x_t)}^2}{b_{t-1}}
 +\sigma\sqrt n.
\end{align*}
Subtract $\E b_n/2$ and multiply by two. By
\eqref{eq:anytime-adagrad-weighted},
\begin{align*}
 \E b_n
 &\leq
 2+2\sigma\sqrt n
 +\E\sum_{t=1}^n
                \frac{\norm{\nabla f(x_t)}^2}{b_{t-1}}\\
 &\leq2+2\sigma\sqrt n+C_A+4L\log(\E b_n).
\end{align*}
For $u\geq1$, $\log u\leq\sqrt u$. Since $\E b_n\geq1$,
another application of Young's inequality gives
\[
 4L\log(\E b_n)\leq4L\sqrt{\E b_n}
 \leq\frac12\E b_n+8L^2.
\]
Substituting this inequality and again moving $\E b_n/2$ to the left,
we obtain
\begin{align*}
 \E b_n
 &\leq2+2\sigma\sqrt n+C_A+\frac12\E b_n+8L^2,\\
 \E b_n
 &\leq4+4\sigma\sqrt n+2C_A+16L^2=M_n.
\end{align*}
Putting this bound into \eqref{eq:anytime-adagrad-weighted} also gives
\[
 \E\sum_{t=1}^n\frac{\norm{\nabla f(x_t)}^2}{b_{t-1}}
 \leq C_A+4L\log(\E b_n)
 \leq C_A+4L\log M_n,
\]
which proves both inequalities in
\eqref{eq:anytime-adagrad-two-bounds}.
Finally, the Cauchy--Schwarz inequality and
\eqref{eq:anytime-adagrad-square-root-sum} yield
\begin{align*}
 \E\norm{\nabla f(A_n)}
 &=\frac1n\E\sum_{t=1}^n\norm{\nabla f(x_t)}\leq
 \frac1n\E\left[
      \sqrt n\sqrt{\sum_{t=1}^n\norm{\nabla f(x_t)}^2}\right]\\
 &=\frac1{\sqrt n}
      \E\sqrt{\sum_{t=1}^n\norm{\nabla f(x_t)}^2}\leq
 \left(
   \frac{\E b_n}{n}
   \E\sum_{t=1}^n\frac{\norm{\nabla f(x_t)}^2}{b_{t-1}}
 \right)^{1/2}\\
 &\leq
 \sqrt{\frac{M_n(C_A+4L\log M_n)}{n}}.
\end{align*}
\end{proof}

\subsection{Completion of the theorem}

Fix the optimization path and positive weights $w_1,\ldots,w_n$.
The probability of selecting index $i$ and retaining it through round $n$ is
\begin{align*}
 \frac{w_i}{\sum_{s=1}^i w_s}
 \prod_{t=i+1}^n\left(1-\frac{w_t}{\sum_{s=1}^t w_s}\right)=\frac{w_i}{\sum_{s=1}^i w_s}
 \prod_{t=i+1}^n\frac{\sum_{s=1}^{t-1}w_s}{\sum_{s=1}^t w_s}
 =\frac{w_i}{\sum_{s=1}^n w_s}.
\end{align*}
With $w_t=\eta_t$ and $w_t=1$, respectively, this proves
\eqref{eq:anytime-reservoir-laws}. Averaging over the paths gives
\begin{align*}
 \E\norm{\nabla f(V_n)}
 &=\frac1{W_n}\sum_{t=1}^n\eta_t
                         \E\norm{\nabla f(x_t^V)},\\
 \E\norm{\nabla f(A_n)}
 &=\frac1n\sum_{t=1}^n\E\norm{\nabla f(x_t^A)}.
\end{align*}
The frozen candidates $V_n,A_n$ each receive $n$ validation
samples and Lemma~\ref{lem:selection} gives
\begin{equation}
\label{eq:anytime-selection-final}
 \E\norm{\nabla f(Z_n)}
 \leq\min\bigl\{\E\norm{\nabla f(V_n)},\,
                  \E\norm{\nabla f(A_n)}\bigr\}
       +\frac{2\sigma}{\sqrt n}.
\end{equation}
Under standard smoothness, Lemma~\ref{lem:anytime-adagrad} gives
\[
 \E\norm{\nabla f(Z_n)}
 \leq\sqrt{\frac{M_n(C_A+4L\log M_n)}{n}}
       +\frac{2\sigma}{\sqrt n},
\]
which is \eqref{eq:anytime-general-guarantee}. Under mean-square
smoothness, Lemma~\ref{lem:anytime-vr} further gives
\[
 \E\norm{\nabla f(Z_n)}
 \leq\min\left\{
 \frac{\Delta_f+2L+2\sqrt K(1+\log n)}{W_n},\,
 \sqrt{\frac{M_n(C_A+4L\log M_n)}{n}}
 \right\}+\frac{2\sigma}{\sqrt n}.
\]
This proves \eqref{eq:anytime-mss-guarantee}.

For $n\geq1$,
\begin{align*}
 M_n&=4+4\sigma\sqrt n+2C_A+16L^2
       \leq(4+4\sigma+2C_A+16L^2)\sqrt n,\\
 \log M_n&\leq\log(4+4\sigma+2C_A+16L^2)
                              +\frac12\log n.
\end{align*}
Hence
\[
 \sqrt{\frac{M_n(C_A+4L\log M_n)}{n}}+\frac{2\sigma}{\sqrt n}
 =O\left(\sqrt{1+\log n}\,n^{-1/4}\right).
\]
For the variance reduction method, $W_n\geq n^{1/3}$ gives
\[
 \frac{\Delta_f+2L+2\sqrt K(1+\log n)}{W_n}
 \leq[\Delta_f+2L+2\sqrt K(1+\log n)]n^{-1/3}.
\]
Therefore,
\[
 \frac{\Delta_f+2L+2\sqrt K(1+\log n)}{W_n}
       +\frac{2\sigma}{\sqrt n}
 =O\left((1+\log n)n^{-1/3}\right).
\]

When $\sigma=0$,
\[
 C_A=4\Delta_f+2\norm{\nabla f(x_1)}^2+8L^2,
 \qquad
 M_n=4+2C_A+16L^2
     =4+8\Delta_f+4\norm{\nabla f(x_1)}^2+32L^2.
\]
Thus $M_n$ is independent of $n$, and
\[
 \E\norm{\nabla f(Z_n)}
 \leq
 \frac{\sqrt{(4+2C_A+16L^2)
              [C_A+4L\log(4+2C_A+16L^2)]}}{\sqrt n}.
\]
This proves the $O(n^{-1/2})$ rate under standard smoothness.

At round $\tau\geq2$, the latest report is
 $Z_n$ with
\[
 n=2^{\lfloor\log_2\tau\rfloor-1},\qquad
 2n\leq\tau<4n,\qquad
 \frac\tau4<n\leq\frac\tau2.
\]
Consequently,
\[
 n^{-1/3}<4^{1/3}\tau^{-1/3},\qquad
 n^{-1/4}<4^{1/4}\tau^{-1/4},\qquad
 n^{-1/2}<2\tau^{-1/2},\qquad
 1+\log n\leq1+\log\tau.
\]
Substitution proves \eqref{eq:anytime-three-rates}.

\end{document}